\DeclareRobustCommand{\SkipTocEntry}[5]{}
\newcommand{\Acal}{{\mathcal{A}}}
\newcommand{\Ccal}{{\mathcal{C}}}
\newcommand{\Dcal}{{\mathcal{D}}}
\newcommand{\Ecal}{{\mathcal{E}}}
\newcommand{\Mcal}{{\mathcal{M}}}
\newcommand{\Ncal}{{\mathcal{N}}}
\newcommand{\Pcal}{{\mathcal{P}}}
\newcommand{\CC}{\mathbb{C}}
\newcommand{\NN}{\mathbb{N}}
\newcommand{\RR}{\mathbb{R}}
\newcommand{\ZZ}{\mathbb{Z}}
\newcommand{\iso}{\cong}
\newcommand{\id}{\textbf{\textit{I}}}
\newcommand{\Vol}{\operatorname{vol}}
\newcommand{\vol}{\operatorname{vol}}
\newcommand{\RFH}{\operatorname{RFH}}
\newcommand{\RFC}{\operatorname{RFC}}
\newcommand{\Crit}{\operatorname{Crit}}
\renewcommand{\id}{\operatorname{id}}
\newcommand{\haat}{\widehat}
\newcommand{\tiilde}{\widetilde}
\newtheorem{theorem}{Theorem}
\newtheorem*{theorem*}{Theorem}
\newtheorem{proposition}{Proposition}[section]
\newtheorem{lemma}[proposition]{Lemma}
\newtheorem{corollary}[theorem]{Corollary}
\theoremstyle{definition}
\newtheorem{definition}[proposition]{Definition}
\newtheorem{example}[proposition]{Example}
\newtheorem{assumption}[proposition]{Assumption}
\newtheorem{criterion}[proposition]{Criterion}
\theoremstyle{remark}
\newtheorem{remark}[proposition]{Remark}
\newtheorem{question}[proposition]{Question}
\numberwithin{equation}{section}
\begin{document}

\title[]{$C^0$-stability of topological entropy for Contactomorphisms}

\author{Lucas Dahinden}
\address{Universität Heidelberg}
\email{l.dahinden@gmail.com}


\date{\today}

\begin{abstract}
	Topological entropy is not lower semi-continous: small perturbation of the dynamical system can lead to a collapse of entropy. In this note we show that for some special classes of dynamical systems (geodesic flows, Reeb flows, positive contactomorphisms) topological entropy at least is stable in the sense that there exists a nontrivial continuous lower bound, given that a certain homological invariant grows exponentially.
\end{abstract}

\maketitle

\tableofcontents

\section{Introduction and results}



\subsubsection*{Topological entropy}
    Topological entropy $h_{\rm top}(\varphi)$ is a good numerical measure for the complexity of a self-map $\varphi:M\to M$ of a compact metrizeable space $M$. We use the following definition of Bowen~\cite{B71} and Dinaburg~\cite{D71}: Fix a metric $d$ generating the topology of $M$. A $(K,\delta)$-separated set is a subset $N\subseteq M$ such that for all $n\neq n'\in N$ there is a $k\in[0,K]$ such that $d(\varphi^k(n),\varphi^k(n'))\geq \delta$. Topological entropy is defined as the growth rate of maximal cardinality of $(T,\delta)$-separated sets:
$$h_{\rm top}(\varphi)= \sup_{\delta>0} \Gamma( \mbox{maximal cardinality of a $(T,\delta)$-separated set} ),$$
where for a sequence of non-negative numbers $a_k$, $\Gamma(a_k):=\limsup\frac1k\log a_k$ is the exponential growth rate.


\subsubsection*{General setup}
		Let $(M,\Lambda,\alpha)$ be a closed contact manifold with closed Legendrian $\Lambda$ which is fillable by the triple $(W,L,\lambda)$ consisting of a Liouville domain with asymptotically conical exact Lagrangian $L$. This means that $M=\partial W, \Lambda=\partial L= L\cap M, \alpha=\lambda|_{TM}$. The Examples~\ref{example} for Theorem~\ref{thm2} satisfy these conditions.
		
    We parametrize the class of positive paths of contactomorphisms by the set of positive contact Hamiltonians 
    $$\Pcal=\{h\in C^\infty(M\times[0,1],\RR)\mid 0< h\}.$$
    The functions $h$ define the contact Hamiltonian vector fields by 
    $$\alpha(X_h)=h;\quad \iota_{X_h}d\alpha=-dh+dh(R_\alpha)\alpha,$$
    and these vector fields generate the paths $\varphi^t_h$ by 
    $$\varphi^0_h=id;\quad \dot\varphi^t_h=X_h(\varphi^t_h).$$
		A positive contactomorphism is the endpoint of a positive path of contactomorphisms. We can extend $\varphi^t$ to a twisted periodic flow by convex combination with a Reeb flow. This means that we change the generating vector field $X$ without changing $\varphi^1$ such that it coincides with a Reeb vector field $X=R$ near $t=0$ and $t=1$, and thus a twisted periodic extension is smooth.
    Note that the function $h=1$ generates the Reeb flow of $\alpha$. The stability then comes from the growth of the positive part of the filtered Lagrangian Rabinowitz--Floer homology $\iota_\infty\RFH_+^T(W,L,h)$ that survives to infinity and which is defined for the subset $\Pcal_{\rm reg}$ of Hamiltonians for which $\bigcup_{t\neq 0}\varphi_h^t(\Lambda)\pitchfork \Lambda$ (For more on $\RFH_+^T(W,L,h)$ see Section~\ref{sec:poscont} and Appendix~\ref{appendix}, for the definition of $\iota_\infty$ see $(2)$ on page~\pageref{infinitePmod}).	

		The following theorem is the main result in~\cite{D18}. 
	
    \begin{theorem*}
    	If for some $h\in\Pcal_{\rm reg}$ the positive Lagrangian Rabinowitz--Floer homology $\RFH_+^T(W,L,h)$ has positive dimensional growth, then for $h\in\Pcal_{\rm reg}$ we have
    	$$h_{\rm top}(\varphi_h^1)>0.$$
    \end{theorem*}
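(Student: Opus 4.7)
The plan is to bridge the algebraic input (exponential growth of $\dim \RFH_+^T$) and the dynamical output ($h_{\rm top}(\varphi_h^1)>0$) via a count of Hamiltonian chords and a Yomdin-type volume-growth argument, in three steps.

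\textbf{Step 1: From RFH growth to chord growth.} For $h\in\Pcal_{\rm reg}$ the chain complex $\RFC_+^T(W,L,h)$ underlying $\RFH_+^T$ is freely generated over a field by positive-action Hamiltonian chords of $h$ from $\Lambda$ to $\Lambda$ of action at most $T$. Since $\dim \RFH_+^T \leq \dim \RFC_+^T$, the assumed exponential growth of the left hand side forces exponential growth of the chord count $C_h(T)$. A standard Floer-continuation argument, with action shift controlled by $\|h-h'\|_\infty$, would then transport this growth from the single ``some $h$'' of the hypothesis to any $h\in\Pcal_{\rm reg}$, since $\RFH_+^T$ is an invariant of $(W,L,\lambda)$ up to linear reparametrization of the action filtration.

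\textbf{Step 2: From chords to Legendrian intersections.} A Hamiltonian chord of period $t\leq T$ is, by definition, a pair $(p,t)$ with $p\in\Lambda$ and $\varphi_h^t(p)\in\Lambda$, and the transversality encoded in $h\in\Pcal_{\rm reg}$ forces each such chord to be isolated, so $C_h(T)=\sum_{t\in(0,T]}\#\bigl(\varphi_h^t(\Lambda)\cap\Lambda\bigr)$. Using the twisted periodic extension of $\varphi_h^t$ described in the setup, I would trade this continuous-time sum for the discrete sum $\sum_{k\leq\lfloor T\rfloor}\#\bigl((\varphi_h^1)^k(\Lambda)\cap\Lambda\bigr)$, which still grows exponentially in $T$.

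\textbf{Step 3: From intersections to topological entropy.} A tubular-neighborhood or Crofton-type slicing argument converts the exponential growth of intersection numbers into exponential growth of the volume of $(\varphi_h^1)^k(\Lambda)$ inside $M$. Since $\varphi_h^1$ is a smooth diffeomorphism, Yomdin's theorem bounds $h_{\rm top}(\varphi_h^1)$ from below by the exponential volume-growth rate of iterates of any smooth submanifold, which yields $h_{\rm top}(\varphi_h^1)>0$ as claimed.

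The main obstacle I anticipate is Step 2, the interface between the continuous action filtration used in Floer theory and the discrete iterates of the time-one map required to feed Yomdin. The twisted periodic extension is precisely what makes this work: it turns the positive contact isotopy into an honest smooth periodic flow whose time-one map is $\varphi_h^1$, so that chords of period $\leq k$ translate into intersections under $(\varphi_h^1)^k$. A secondary subtlety, already foreshadowed in the excerpt by the map $\iota_\infty$, is to ensure that none of the chord count is lost to homological cancellation; working with the image under $\iota_\infty$ rather than with $\RFH_+^T$ itself would make that explicit.
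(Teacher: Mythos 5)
Your overall scaffolding (RFH dimension bounds the chord count, chord count drives volume growth, Yomdin/Newhouse converts volume growth to entropy) is the right skeleton and does match the paper's strategy. Your Step~1 is essentially correct, and you also correctly flag the role of $\iota_\infty$. However, Steps~2 and~3 each have a concrete gap.

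\textbf{Step 2 discretizes at the wrong level.} A chord of length $t \leq T$ with non-integer $t$ does \emph{not} produce a point of $(\varphi_h^1)^k(\Lambda)\cap\Lambda$ for $k=\lfloor t\rfloor$ or any other integer $k$; generically no chord has integer length, so the sum $\sum_{k\le\lfloor T\rfloor}\#\bigl((\varphi_h^1)^k(\Lambda)\cap\Lambda\bigr)$ is simply zero and bears no relation to $C_h(T)$. The correct place to pass from continuous to discrete time is \emph{after} converting chord growth into volume growth. One considers the trace $A^T=\bigcup_{t\in[0,T]}\varphi^t(\Lambda)$, bounds its $(k+1)$-volume from below by the chord count via a Crofton/coarea-type integral, and then bounds $\Vol(\varphi^t(\Lambda))$ from below by $\Vol(A^T)$ up to a constant using the uniform bound on the generating vector field. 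Twisted periodicity is then used only to identify the volume growth of $\varphi^t(\Lambda)$ along $t\in\RR$ with the volume growth of the iterates $(\varphi^1)^k(\Lambda)$, which is what Yomdin's inequality needs. This is exactly the content of Steps~1--6 in the paper's Section~3.

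\textbf{Step 3 is missing the perturbation-of-Legendrian lemma.} RFH only counts chords from $\Lambda$ back to $\Lambda$ itself. A Crofton/slicing argument requires a lower bound on the number of chords from $\Lambda$ to an entire $(k+1)$-parameter family of nearby Legendrians (the fibers of a fibration $\Ncal\to B$ around $\Lambda$), uniformly in the fiber. This is nontrivial: you must show the chord count to a \emph{nearby} Legendrian $\Lambda'$ still grows exponentially, with rate degraded only by a controllable factor. The paper does this in Proposition~\ref{changeLegendrian}: a contactomorphism $\psi$ carrying $\Lambda$ to $\Lambda'$ with $(\psi^{-1})^*\alpha = f\alpha$ transforms chords of $X_h$ ending at $\Lambda'$ into chords of a new Hamiltonian ending at $\Lambda$, with the new Hamiltonian bounded below by $\min f\cdot\min h$ after a convex-combination-with-Reeb-flow smoothing. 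Without this step, the exponential growth of $C_h(T)$ only concerns the zero-measure fiber $\Lambda$ and does not control the volume integral. Your phrase ``tubular-neighborhood or Crofton-type slicing argument'' gestures at the right tool but does not supply the ingredient that makes it legitimate.
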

		
		The main theorem of this article is the addition of stability in the sense of Definition~\ref{stab} below. 
		
		\begin{theorem}[Main theorem]\label{thm2}
    	Under the same assumptions, denote the dimensional growth of  $\RFH_+^T(W,L,h)$ by $\Gamma_{\RFH}(h):=\Gamma(\dim\iota_\infty\RFH^{<T}(h))$. Then
    	$$h_{\rm top}(\varphi_h^1)\geq \Gamma_{\rm RFH}(h).$$
    	This lower bound extends continuously to all of $\Pcal$, is positive and stable in the $C^0$-topology on $\Pcal$: 
    	$$\Gamma_{\RFH}(h)\geq \min_{x,t} h(x,t)\cdot\Gamma_{\rm RFH}(1)>0.$$
    \end{theorem}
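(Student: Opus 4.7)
I would separate the theorem into two independent assertions: (a)~the homological lower bound on entropy $h_{\rm top}(\varphi_h^1)\geq\Gamma_{\RFH}(h)$ for $h\in\Pcal_{\rm reg}$, and (b)~the scaling inequality $\Gamma_{\RFH}(h)\geq \min h\cdot \Gamma_{\RFH}(1)$, from which the continuous $C^0$-extension to all of $\Pcal$ is automatic since $h\mapsto\min h$ is manifestly $C^0$-continuous. Positivity of the right hand side will follow because $h>0$ on the compact domain $M\times[0,1]$ and $\Gamma_{\RFH}(1)>0$ is the standing hypothesis (cf.\ Examples~\ref{example}).

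For (a), I would reprise and sharpen the argument of the cited theorem from~\cite{D18}. That theorem says that exponential growth of $\iota_\infty\RFH_+^T(W,L,h)$ forces positive topological entropy; the proof proceeds by counting Reeb/Hamiltonian chords of $\Lambda$ of action $<T$ from below via Floer generators, then translating each chord to a trajectory piece which is $(K,\delta)$-separated from the others (for a $\delta$ that depends only on $\Lambda$ and a transversality neighbourhood, not on $T$). Reading off the same proof with exponential growth rates in place of mere positivity yields the quantitative bound $h_{\rm top}(\varphi_h^1)\geq \Gamma_{\RFH}(h)$. The main point to verify is that the $\delta$ in Bowen's definition can be chosen uniformly in $K$, which the Legendrian set-up and the transversality in $\Pcal_{\rm reg}$ provide.

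For (b), I would first establish the identity for constant Hamiltonians. If $h\equiv c>0$, then $\varphi_c^t=\varphi_1^{ct}$, so chords of $h=c$ with Rabinowitz action $<T$ are in bijection with Reeb chords of action $<cT$; this correspondence extends to the Floer chain level and commutes with the $\iota_\infty$ construction, giving
$$\dim\iota_\infty\RFH^{<T}(c)=\dim\iota_\infty\RFH^{<cT}(1),\qquad\text{hence}\qquad \Gamma_{\RFH}(c)=c\cdot\Gamma_{\RFH}(1).$$
For arbitrary $h\in\Pcal_{\rm reg}$ with $c:=\min_{x,t} h(x,t)$, I would invoke the standard monotone continuation in filtered Rabinowitz--Floer theory: the monotone homotopy from $c$ up to $h$ induces a chain map which is action-nonincreasing on positive generators, hence descends to a map $\iota_\infty\RFH^{<T}(c)\to\iota_\infty\RFH^{<T}(h)$ that is injective (being compatible with the directed system defining $\iota_\infty$). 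Combining,
$$\Gamma_{\RFH}(h)\geq \Gamma_{\RFH}(c)= c\cdot\Gamma_{\RFH}(1)=\min_{x,t} h(x,t)\cdot\Gamma_{\RFH}(1).$$
Since this bound is $C^0$-continuous in $h$ and positive, it extends to the desired continuous, positive, $C^0$-stable lower bound on all of $\Pcal$.

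\emph{Expected obstacle.} The step requiring care is the monotonicity argument in (b): one must verify that the continuation map for a monotone homotopy of contact Hamiltonians respects both the action filtration and the inverse/direct limit defining $\iota_\infty$, and that the resulting map on the ``surviving to infinity'' part is actually injective (not merely a map of vector spaces). This is essentially a bookkeeping exercise in the algebra of filtered RFH recalled in Section~\ref{sec:poscont} and Appendix~\ref{appendix}, but is the only nontrivial step in going from the constant case to arbitrary $h$.
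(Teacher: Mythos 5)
Your split into (a) a homological bound on entropy for $h\in\Pcal_{\rm reg}$ and (b) the Lipschitz estimate $\Gamma_{\RFH}(h)\geq\min h\cdot\Gamma_{\RFH}(1)$ is exactly the paper's, and your argument for (b) is correct and substantially the paper's: the scaling identity $\Gamma_{\RFH}(c)=c\,\Gamma_{\RFH}(1)$ for constants is property~(\ref{cofinal}) of the persistence module, and for $c=\min h$ the commuting triangle $\iota_\infty\circ\phi^T_{c,h}=\iota_\infty$ (under the identification $\RFH_+(c)\iso\RFH_+(h)$ given by continuation) exhibits $\iota_\infty\RFH^T_+(c)$ as a subspace of $\iota_\infty\RFH^T_+(h)$ inside $\RFH_+$, giving the dimension bound directly. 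The paper packages the same facts as a $\log(C/c)$-interleaving of logarithmized persistence modules, which also yields an upper bound $\dim\iota_\infty\RFH^T_+(h)\leq\dim\iota_\infty\RFH^{(C/c)T}_+(c)$, but for the claimed inequality your one-sided version suffices. You are also right that action-monotonicity of continuation maps is the technical crux; this is Proposition~\ref{monotonicity} in the appendix.

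Your treatment of (a), however, diverges from the paper and has a real gap. You propose to translate Floer generators directly into a $(K,\delta)$-separated set with $\delta$ independent of $T$. But distinct $X_h$-chords from $\Lambda$ to $\Lambda$ of length $\le T$ may have initial points on $\Lambda$ that are exponentially close in $T$, and nothing you say forces separation with a uniform $\delta$ --- in the geodesic case this would require a universal-cover or homotopy-class argument that has no analogue for general positive contactomorphisms. The paper avoids this entirely: by Yomdin~\cite{Y} and Newhouse~\cite{N}, $h_{\rm top}(\varphi)=\Gamma_{\rm vol}(\varphi)$ for smooth maps of compact manifolds, and the volume of the trace $A^T=\bigcup_{t\le T}\varphi^t(\Lambda)$ is bounded below by integrating a chord count over a fibered neighbourhood $\pi:\Ncal\to B$ of $\Lambda$ (Criterion~\ref{ass} and Theorem~\ref{thm1}). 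That integration requires chord growth from $\Lambda$ not to $\Lambda$ itself but to \emph{almost every fiber} of $\Ncal$, uniformly in the fiber; this is supplied by the stability of chord counting under Legendrian perturbation, Proposition~\ref{changeLegendrian} (from~\cite{D18}), which also lets one drop the $h\in\Pcal_{\rm reg}$ hypothesis. The passage from ``exponential chord count at the single Legendrian $\Lambda$'' to ``uniform exponential chord count over a positive-volume family of nearby Legendrians'' is the ingredient missing from your proposal, and it is where the Rabinowitz--Floer machinery does its real work.
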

		
		See Example~\ref{example} for a list of examples for which the theorem applies.
				
	\begin{remark}
		Our proof holds for the much more general class of (not necessarily twisted periodic) positive paths of contactomorphisms, as long as we impose uniform bounds $0<c\leq h\leq C$. However, the conclusions only hold for volume growth and we do not know the connection to topological entropy. For more on this, see Remark~\ref{nonautonomous}.
	\end{remark}



With the method of proof of Theorem~\ref{thm2} we are able to conclude a statement on \emph{orderability} in the sense of~\cite{EP00}: A contact manifold is orderable if there is no positive loop of contactomorphisms. 
\begin{corollary}\label{cor}
	Under the same assumptions as Theorem~\ref{thm2}, $(M,\alpha)$ is orderable.
\end{corollary}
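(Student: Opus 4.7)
The plan is a one-step contradiction from Theorem~\ref{thm2}. Suppose $(M,\alpha)$ is \emph{not} orderable, so there exists a positive loop of contactomorphisms, i.e., some $h\in\Pcal$ whose generated path satisfies $\varphi_h^1=\id$. Since the identity has vanishing topological entropy, $h_{\rm top}(\varphi_h^1)=h_{\rm top}(\id)=0$.

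On the other hand, Theorem~\ref{thm2} furnishes a positive, $C^0$-stable lower bound on $h_{\rm top}(\varphi_h^1)$ on all of $\Pcal$:
$$h_{\rm top}(\varphi_h^1)\;\geq\;\Gamma_{\RFH}(h)\;\geq\;\min_{x,t} h(x,t)\cdot\Gamma_{\RFH}(1)\;>\;0.$$
The standing hypothesis (positive dimensional growth of $\RFH_+^T$ for some $h_0\in\Pcal_{\rm reg}$) is exactly what makes $\Gamma_{\RFH}(1)>0$ via the stability estimate, and $\min h>0$ because $h$ is a positive Hamiltonian. Combining the two bounds yields $0\geq\Gamma_{\RFH}(h)>0$, which is absurd; so no such loop can exist.

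The only delicate point is that a loop Hamiltonian $h$ need not lie in $\Pcal_{\rm reg}$, i.e., need not satisfy $\bigcup_{t\neq 0}\varphi_h^t(\Lambda)\pitchfork\Lambda$, whereas the estimate $h_{\rm top}\geq\Gamma_{\RFH}$ is originally produced on $\Pcal_{\rm reg}$. The continuous extension of the lower bound to all of $\Pcal$ that Theorem~\ref{thm2} asserts is precisely what licenses applying the inequality at $h$, so I would simply invoke it. If one wanted to avoid that extension, a natural fallback is iteration: the reparametrization $h_N(t,x):=N\,h(Nt\bmod 1,x)$ also ends at $\varphi_{h_N}^1=\varphi_h^N=\id$ and has $\min h_N=N\min h\to\infty$, so the $\Pcal_{\rm reg}$-version of Theorem~\ref{thm2} applied to a $C^0$-small perturbation of $h_N$ produces contactomorphisms arbitrarily $C^0$-close to $\id$ whose topological entropy is bounded below by $\tfrac{N}{2}\min h\cdot\Gamma_{\RFH}(1)$; combined with an equicontinuity-based upper semicontinuity argument on the contact flows this would close the loophole left by the failure of lower semicontinuity of $h_{\rm top}$. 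I expect the direct route via the full $\Pcal$-version of Theorem~\ref{thm2} to be the intended one, as it is exactly the $C^0$-stability content advertised in the main theorem, making the corollary essentially a one-line consequence.
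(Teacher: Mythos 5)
Your argument is correct, and the direct route you favour is valid: a positive loop gives a Hamiltonian $h\in\Pcal$ with $\varphi^1_h=\id$, hence $h_{\rm top}(\varphi^1_h)=0$, which is incompatible with the strictly positive lower bound $h_{\rm top}(\varphi_h^1)\geq\Gamma_{\RFH}(h)\geq\min h\cdot\Gamma_{\RFH}(1)>0$ that Theorem~\ref{thm2} (in its extended, all-of-$\Pcal$ form) provides. You also correctly flag the one delicate point --- a loop Hamiltonian never lies in $\Pcal_{\rm reg}$, since $\varphi^1_h(\Lambda)=\Lambda$ is not transverse to $\Lambda$ --- and resolve it by invoking the continuous extension of the bound to all of $\Pcal$, which the last paragraph of Section~\ref{sec:poscont} supplies via perturbation of the Legendrian.

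The paper, however, takes a genuinely different and slightly more elementary route. Rather than pass through topological entropy and the Yomdin--Newhouse identification $h_{\rm top}=\Gamma_{\rm vol}$, it works directly at the level of Riemannian volume: a periodic loop means $\operatorname{vol}(\varphi^t(\Lambda))$ is bounded in $t$, whereas the intermediate steps of the proof of Theorem~\ref{thm1} (Steps~2--6, driven by the exponential chord count) force $\operatorname{vol}(\varphi^t(\Lambda))$ to grow exponentially --- an immediate contradiction. The paper explicitly advertises this as the point of the argument (``the proof given here by volume growth might be of interest because of its simplicity''), and it has the advantage of not invoking $h_{\rm top}(\id)=0$, the definition of entropy, or Yomdin--Newhouse at all; the contradiction is purely geometric. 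Your version is a cleaner black-box consequence of the statement of Theorem~\ref{thm2}, while the paper's version reuses only the volume-growth mechanism inside the proof. Both are sound. Your iteration/equicontinuity fallback is not needed and, as you acknowledge, is not fully worked out --- I would simply drop it and keep the direct route.
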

\begin{proof}
	Choose an auxiliary Riemannian metric on $M$, all volumes will be measured with respect to this metric. 
	
	If $(M,\alpha)$ is not orderable, then there is a positive path of contactomorphisms $\varphi^t$ that starts and ends at the identity. We can extend $\varphi^t$ to a periodic family of positive contactomorphisms. The restriction to $\Lambda$ is the positively transverse family of Legendrians $\varphi^t(\Lambda)$, which has bounded volume since it is periodicity. 
	
	On the other hand, if the assumptions of Theorem~\ref{thm2} are satisfied, the proof in Section~\ref{sec:proofProp} reveals that the volume of $\varphi^t(\Lambda)$ grows exponentially.
\end{proof}
\begin{remark}
	In~\cite{AF12} the statement is already concluded for cosphere bundles. The proof there relies on strict inequalities of spectral invariants. The proof given here by volume growth might be of interest because of its simplicity.	
\end{remark}
	In~\cite{AM18} it is concluded that already non-vanishing of \emph{closed string} Rabinowitz--Floer homology of Liouville fillable contact manifolds implies orderability. This leads to the following question:
\begin{question}
	Does non-vanishing of Lagrangian Rabinowitz--Floer homology imply orderability? 
\end{question}
	Resolving this question might involve establishing an intrinsic relationship between Lagrangian and closed string Rabinowitz--Floer homology, in analogy to the better understood relationship between wrapped homology and symplectic homology through the closed-open map~\cite{Abo10}.

\subsection{Stability of topological entropy}

\subsubsection*{Computability of topological entropy} Unfortunately, topological entropy is very hard to compute explicitly, see~\cite{HKC92,M02,GHRS19}. One reason for this is the lack of lower semi-continuity of the function $\varphi\mapsto h_{\rm top}(\varphi)$ that associates to a map its entropy. The examples in Section~\ref{sec:examples} show that even for smooth diffeomorphisms of 3-dimensional closed manifolds, $h_{\rm top}$ can collapse to 0 under perturbation, i.e.\ can jump up from 0 when passing to a limit. Only in very special situations the topological entropy is continuous. For example, in the class of diffeomorphisms of closed 2-dimensional surfaces, topological entropy is continuous due to the existence and stability of hyperbolic sets, see e.g.~\cite[S5.6]{HK}. For further stability results see~\cite{HSX} or~\cite{DFPV} and the references therein.

    \begin{remark}[Upper semi-continuity]
        On the class of smooth maps, $h_{\rm top}$ is upper semi-continuous in the $C^\infty$ topology, see~\cite{N}. This fails on the class of $C^r$ maps. 
    \end{remark}

\subsubsection*{Stability of topological entropy}	
    Since there is no hope for lower semi-continuity of topological entropy, we aim at the next best thing: we look for an interesting class of spaces and maps for which the topological entropy is ``stable'', i.e.\ there is a nontrivial continuous lower bound for $h_{\rm top}$. Given stability, it is still hard to explicitly compute entropy, but one can estimate it from below, since total collapse of entropy under perturbation is prevented. 
	
	To formalize this, let $\Pcal$ be an appropriately topologized parameter space which continuously parametrizes a subset of the space of smooth self-maps $\Pcal\to\Dcal\subseteq C^\infty(M,M)$ equipped with the $C^0$-topology.
	
\begin{definition}\label{stab}
Topological entropy is \emph{stable} on $\Pcal$ if there exists a not identically vanishing continuous function 
	$$\gamma:\Pcal\to\RR_{\geq 0} \mbox{ such that }\gamma(p)\leq h_{\rm top}(\varphi_p).$$
\end{definition}
	Note that points in $\Dcal$ where $\gamma(p)=h_{\rm top}(\varphi_p)$ are points of lower semi-continuity of topological entropy in the $C^0$-topology and therefore also points of lower semi-continuity of topological entropy in $C^\infty$-topology. Since topological entropy is upper semi-continuous in $C^\infty(M,M)$ with $C^\infty$-topology, points where $\gamma(p)=h_{\rm top}(\varphi_p)$ are points of continuity of topological entropy on $\Dcal$ in $C^\infty$-topology.
	
	\begin{remark}
		The main focus of this article lies on (a subset of) the space of smooth contact Hamiltonians parametrizing (a subset of) the space of smooth contactomorphisms with $C^0$-topology. In order for this parametrization to be continuous, we must topologize the space of contact Hamiltonians with at least $C^1$-topology. However, the stability function provided by Theorem~\ref{thm2} is continuous even in $C^0$-topology. 
	\end{remark}
	
	\begin{remark}
		Given stability of topological entropy, one still can collapse topological entropy by large perturbations. See e.g.~\cite{AAS20}, where it is proven that every contact structure on a closed manifold admits a sequence of contact forms with fixed volume whose Reeb flows have topological entropy converging to 0. This is complementary to the present article, where we show stability under small perturbations. 
	\end{remark}

	
\subsubsection*{A criterion for stability}
	We work in the setting of maps $\varphi$ which are time-1 maps of twisted periodic flows, a class of flows containing autonomous flows: let $\varphi^t$ be a smooth 1-parameter family of maps such that $\varphi^0=id$ and $\dot\varphi^t=X^t$, where $X^t$ is a 1-periodic vector field. Then, iterations $(\varphi)^k$ of $\varphi:=\varphi^1$ coincide with $\varphi^t$ at integer times $t=k$. We formulate a criterion that implies stability of topological entropy of $\varphi$. It is used in all the examples given at the end of the introduction. 
	
	Let a class of twisted periodic flows $\varphi^t$ of a closed manifold $M$ be parametrized by $\Pcal$. Our main assumption is the abundance of chords from a fixed submanifold to a generic fiber of some fibered region in the manifold:
	
	\begin{criterion}\label{ass}
	    There is a submanifold $A^k\subseteq M$ of dimension $k$ and a subset $\Ncal\subseteq M$, where $\pi:\Ncal\to B$ is a fibration over a manifold $B^{k+1}$ of dimension $k+1$ such that the following holds. There is a continuous positive function $\alpha:\Pcal\to\RR_{>0}$ such that for almost all $b\in B$ the number $n(A,b,T,\varphi^t)$ of $\varphi^t$-chords from $A$ to the fiber over $b$ of length $\leq T$ is finite and grows uniformly $\alpha$-exponentially in $T$, i.e.\ for some $T_0=T_0(\varphi^t)$ we have for $T\geq T_0$:
		$$n(A,b,T,\varphi^t)\geq e^{\alpha T}.$$
      Note that then the exponential growth rate of $n(A,b,T,\varphi^t)$ is at least $\alpha$. We assume the number $T_0$ to be uniform in $b$, but not necessarily continuous in $\Pcal$.
	\end{criterion}
	
	\begin{remark}
		For the proof of Theorem~\ref{thm1} below there is no restriction on the dimension $k$ of the submanifold $A$. However, for the more specialized situation of Theorem~\ref{thm2} the submanifold $A$ will always be a Legendrian, i.e.\ if $M$ is $2n+1$-dimensional, then $k=n$. Examples where $k$ is less than half-dimensional can be created in a meaningful way by raising the codimension of $A$: Let Criterion~\ref{ass} be satisfied by $(M,A,\Ncal\stackrel{\pi}\to B,\Dcal)$ and let $M$ be a submanifold in a higher dimensional closed manifold $W$. Then, Criterion~\ref{ass} is also satisfied by $(W,A,\tiilde\Ncal\stackrel{\tiilde\pi}\to B,\tiilde\Dcal)$, where $\tiilde\Ncal=\Ncal\oplus\nu(M)\stackrel{\tiilde\pi}\to B$ is the Whitney sum of $\pi$ with the normal bundle of $M$ and $\tiilde\Dcal=\{\varphi\in C^{\infty}(W,W)\mid \varphi(M)=M, \varphi|_{M}\in\Dcal\}$ is the set of smooth extensions.
	\end{remark}
	
	The following result has been used in specific situations multiple times before. 
	We give a proof for the following general formulation in Section~\ref{sec:proofProp}.
	
	\begin{theorem}\label{thm1}
	  	Given a class of smooth self-maps $\varphi$ that are time-1 maps of twisted periodic flows $\varphi^t$ of a compact manifold
	  	$M$ which satisfy Criterion~\ref{ass}, then $\alpha$ constitutes a stability function for the topological entropy of $\varphi=\varphi^1$.
	\end{theorem}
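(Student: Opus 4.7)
The plan is to convert the chord counts supplied by Criterion~\ref{ass} into exponential volume growth of the iterates $\varphi^n(A)$ of the submanifold $A$, and then invoke Yomdin's theorem relating volume growth to topological entropy.

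The first step is to promote the chord count to an integrated volume estimate via coarea. Introduce
\[
F\colon A\times[0,T]\to M,\qquad F(a,t)=\varphi^t(a),
\]
and its composition $\pi\circ F\colon F^{-1}(\Ncal)\to B$, which is a map between spaces of equal dimension $k+1$. For almost every $b\in B$ the fibre $(\pi\circ F)^{-1}(b)$ is exactly the finite set of chords counted by $n(A,b,T,\varphi^t)$, so the change-of-variables formula gives
\[
\int_B n(A,b,T,\varphi^t)\,db=\int_{A\times[0,T]}\bigl|\det D(\pi\circ F)(a,t)\bigr|\,da\,dt.
\]
Factoring $D(\pi\circ F)=D\pi\circ DF$ with $DF_{(a,t)}(v,s)=D\varphi^t(v)+sX^t(\varphi^t(a))$, and using compactness of $M$ together with the $1$-periodicity of $X^t$, I would bound the pointwise Jacobian by a uniform constant times the $k$-volume density $|\det D\varphi^t|_{T_aA}|$. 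Combined with the hypothesis $n(A,b,T,\varphi^t)\ge e^{\alpha T}$ for $T\ge T_0$, integration in $a$ yields
\[
\int_0^T\vol_k(\varphi^t(A))\,dt\ \ge\ c\,e^{\alpha T}
\]
for some $c>0$ depending only on $\varphi^t$ and $\pi$, not on $T$.

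Next, since $X^t$ is $1$-periodic and uniformly bounded, the ratio $\vol_k(\varphi^{t'}(A))/\vol_k(\varphi^t(A))$ stays within a uniform multiplicative range for $|t-t'|\le 1$, so the integrated estimate forces
\[
\limsup_{n\to\infty}\frac1n\log\vol_k(\varphi^n(A))\ \ge\ \alpha
\]
along integer $n$. Yomdin's inequality for $C^\infty$ diffeomorphisms then gives
\[
\limsup_{n\to\infty}\frac1n\log\vol_k(\varphi^n(A))\ \le\ h_{\rm top}(\varphi),
\]
so $h_{\rm top}(\varphi)\ge\alpha(\varphi^t)$. Continuity and positivity of $\alpha$ on $\Pcal$ are built into Criterion~\ref{ass}, so $\alpha$ is a stability function in the sense of Definition~\ref{stab}.

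The main obstacle will be the quantitative coarea bookkeeping in the first step: the Jacobian of $\pi\circ F$ can degenerate where $\varphi^t(A)$ is tangent to the fibres of $\pi$, so one has to exploit the ``almost all $b$'' clause and track how the constants depend on that generic choice. Twisted periodicity is what makes all these estimates uniform in $T$, since the flow data reduce to a compact fundamental domain $t\in[0,1]$; without it one would only obtain volume growth, which is the content of the remark immediately following Theorem~\ref{thm2}.
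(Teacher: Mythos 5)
Your proof takes essentially the same route as the paper's: both reduce topological entropy to volume growth via Yomdin, pass from $\vol_k(\varphi^t A)$ to the $(k+1)$-volume of the trace $\bigcup_{t\in[0,T]}\varphi^t A$ using boundedness of the periodic vector field, project to $B$ via $\pi$, and invoke coarea to compare with $\int_B n(A,b,T,\varphi^t)\,d\Vol_B$ before applying Criterion~\ref{ass}. The only cosmetic difference is that you work directly with the Jacobian of $F(a,t)=\varphi^t(a)$ and change of variables, while the paper chooses an auxiliary metric with $\Vol^{k+1}_g\geq\pi^*\Vol^{k+1}_B$ up front to streamline the bookkeeping into six one-line steps. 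Your closing worry about the Jacobian of $\pi\circ F$ degenerating on tangency loci is unfounded: the degeneracy only decreases the left-hand side of the coarea identity, which is the direction you want, and the constant $c$ in $\int_0^T\vol_k(\varphi^t(A))\,dt\geq c\,e^{\alpha T}$ depends only on $\Vol(B)$, $\sup\|X^t\|$, and $\|D\pi\|$, none of which involve the generic $b$.
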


\subsection{Examples}


In the following we give a list of examples where the Criterion~\ref{ass} is satisfied. All but the last also fit into the framework of Theorem~\ref{thm2}, although some results may be shown by more classical means. 
\begin{example}\label{example}
    As a first example let $Q$ be a closed manifold with exponentially growing fundamental group. The sets in Criterion~\ref{ass} are 
		\begin{itemize}
			\item $\Pcal=\{\mbox{Riemannian metrics on $Q$}\}$,
			\item $\Dcal=\{\varphi^1\mid \varphi^t\mbox{ is a geodesic flow on }SQ\}$, 
			\item $A=S_pQ$, $B\subseteq Q$ is a neighbourhood of $p$ and $\Ncal=\pi^{-1}(B)$. 
		\end{itemize}
	The number of geodesic chords from $A$ to $\pi^{-1}(q)$ of length at most $R$ is then at least the number of elements of $\pi_1(Q,p,q)$ that are represented by paths that lift into the ball $\tiilde B_R(p)\subseteq\tiilde Q$ in the universal cover, since the minimizer of length is a geodesic. The function $\gamma$ is then given by the growth of this number, which is independent of $q$ and positive if the group growth of $\pi_1(Q)$ is positive. Continuity of $\gamma$ with respect to $g$ in $C^0$-norm comes from the fact that the balls $\tiilde B_R(p)$ vary continuously with $g$. This is a classic discussion: We define the ball volume growth\footnote{In Riemanniann geometry this is called volume growth. We use the different term to avoid confusion with $\Gamma_{\rm vol}$.} as the growth of volume of a ball in the universal cover of $Q$: $\Gamma_{\rm ball}(M,g)=\Gamma(\Vol(\tiilde B_R(p))$. For the relation between $\Gamma_{\rm ball}$ and the group growth of $\pi_1$ and for the fact that $\Gamma_{\rm ball}$ is a lower bound for topological entropy, see~\cite{M79}. The example also fits into the framework of Theorem~\ref{thm2}: The Liouville domain is the sublevel set $\{H(p,q)<1\}\subseteq T^*Q$ in the cotangent bundle, where $H=\frac12 \|p\|^2$ is the Hamiltonian associated to the Riemannian metric. Incidentally, it is the Hamiltonian associated to the contact Hamiltonian $h\equiv 1$ on the boundary. The Lagrangian $L=T^*_qQ$ is the fiber over the point $q$. The filtered Rabinowitz--Floer homology groups split into a product over the fundamental group: 
	$$\iota_\infty\RFH^{<T}(h)=\prod_{\alpha\in\pi_1(Q)}\iota_\infty\RFH^{<T}_\alpha(h).$$ 
	The number of $\alpha\in\pi_1(Q)$ represented by a geodesic of length $<\frac12R^2$ is estimated from below by the number of nontrivial factors $\iota_\infty\RFH^{<\frac12R^2}_\alpha(h)$.
		    
		A bit more intricate is the situation if the fundamental group of $Q$ is finite and the homology of the based loop space grows exponentially. The function $\gamma$ is then given by the growth $\Gamma(\dim \iota_\infty HM^{\leq T}(\Ecal))$ of the Morse homology of the set of loops in $M$ of energy at most $T$. As Gromov~\cite{Gro78} showed, positivity of $\gamma$ is a topological invariant of $Q$. Again, this can be expressed in terms of Theorem~\ref{thm2}, which leads immediately to the following generalization that was first explored by Frauenfelder--Macarini--Schlenk~\cite{MacSch11,FraSch06}: In terms of Criterion~\ref{ass} we have the sets $\Pcal=\{\mbox{Contact forms on $(S^*Q,\xi_{\rm std})$}\}$ and $\Dcal=\{\varphi^1\mid \varphi^t\mbox{ is a Reeb flow on }S^*Q\}$. Using the Abbondandolo--Schwarz isomorphism from the Morse homology of the based loop space to Lagrangian Floer homology, one finds the function $\gamma$ is given by $\gamma(\alpha)=C\min \frac{\alpha_0}{\alpha}$, where the constant $C$ is the dimensional growth of the Lagrangian Floer homology of a reference contact form $\alpha_0$. This setup can be further transported into the framework of Theorem~\ref{thm2} by a Lagrangian analog~\cite{D18} of the Cieliebak--Frauenfelder--Oancea isomorphism.

    A further extension of the above results was given by Alves--Meiwes~\cite{AM17} to boundaries of Liouville domains with exponentially growing wrapped Floer homology (the open string analog to symplectic homology). The role of $A$ is then taken by some Legendrian sphere that is fillable by an exact Lagrangian, and $\Ncal$ is a neighbourhood of $A$ which is fibered by Legendrian spheres. In this framework Alves and Meiwes managed to construct many examples of contact manifolds different from cosphere bundles that admit a nonvanishing function $\gamma$. Among the examples are (non-standard) contact spheres of dimension $\geq 7$, $S^3\times S^2$, and non-standard contact structures on any plumbing of cosphere bundles of base dimension $\geq 4$.

    All the above examples were generalized by the first author to the class of positive contactomorphisms~\cite{D18}. As a subclass we find lightlike geodesic flows on globally hyperbolic Lorentz manifolds, which might be of independent interest, see~\cite{D20}.

    As a final example we mention Reeb flows on 3-manifolds with a Legendrian knot whose cylindrical contact homology has exponential growth, studied by Alves~\cite{Alv16a}. This situation arises in hyper-tight contact manifolds with pseudo-Anosov fundamental group. 
\end{example}

\addtocontents{toc}{\SkipTocEntry}
\subsection*{Acknowledgments}
	I wish to thank Leonid Polterovich and Felix Schlenk for inputs and advice. This work is supported by SFB/TRR 191 `Symplectic Structures in Geometry, Algebra and Dynamics' funded by the DFG.

\section{Examples for total collapse of topological entropy}\label{sec:examples}

The following example by Milnor~\cite{M02} shows that topological entropy is not lower semi-continuous, even in very simple settings: it fails for smooth maps from the closed unit disk to itself.
\begin{example}
	The family $f_t:D\subseteq\CC\mapsto D;\;z\mapsto tz^2$ of maps is smooth. Looking at the restriction $f_1|_{\partial D}:\partial D\to\partial D$ one sees that $h_{\rm top}(f_1)=\log 2$. However, $f_t$ has zero entropy for all $t<1$ since the chain recurrent set of $f_t$ is the origin.
\end{example}
	If one imposes that the surface be closed and the map be a $C^r, r\geq1,$ diffeomorphism, we have lower semi-continuity because of the existence and local stability of hyperbolic sets, as described in~\cite[S5.6]{HK}.
	However, this result is 2-dimensional in essence and ceases to hold in higher dimensions.
	The following example shows the lack of lower semi-continuity of topological entropy in the category of $C^r$ diffeomorphisms $(r\geq 1$ or $r=\infty)$ of closed manifolds in three dimensions:
\begin{example}
	Let $\Sigma^2$ be a closed surface and $f:\Sigma\to\Sigma$ be a $C^r$ diffeomorphism with $h_{\rm top}(f)>0$ that is isotopic to the identity through a smooth path $f_s$ of $C^r$ diffeomorphisms. We can assume that $f_s=f$ for $s$ near $0$ and $f_s=id$ for $s$ near $1$. Let $\tau:T^1\to\RR$ be a bump function on the circle $T^1=\RR/\ZZ$ supported in $(0,\frac12)$ with $\tau(\frac14)=1$. Further, let $g_t$ be the negative gradient flow of a Morse function on $T^1$ with critical points only in $0$ and $\frac12$. Then
	$F_t:\Sigma\times T^1\to\Sigma\times T^1;\;(x,\theta)\mapsto(f_{\tau(\theta)}(x),g_t(\theta))$
	is a smooth family of $C^r$ diffeomorphisms such that $F_0=(f_{\tau(\theta)},\id)$ has positive topological entropy in the fiber $\theta=\frac14$ and such that $F_t$ has zero topological entropy for every $t>0$ since the restriction to the chain recurrent set $\Sigma\times\{0,\frac12\}$ of $F_t$ is the identity.
\end{example}

\section{Proof of Theorem~\ref{thm1}}\label{sec:proofProp}
    Here, we give a proof of the Stability Theorem~\ref{thm1} under the assumption of Criterion~\ref{ass}. The proof is based on the proofs in the special situations, see e.g.~\cite{FLS15,D18}. 
				
		In this paper, we always work with smooth maps $\varphi$ on compact manifolds $M$, so by a combined theorem of Yomdin~\cite{Y} and Newhouse~\cite{N}, topological entropy coincides with volume growth:
    $$h_{\rm top}(\varphi^t)=\Gamma_{\rm vol}(\varphi^t)=\sup_{S\subset M}\Gamma(\vol \varphi^t(S)),$$ 
    where the supremum is taken over all compact submanifolds  $S$ of arbitrary codimension and $\vol$ is taken with respect to any Riemannian metric. Since volume growth is independent of the choice of Riemannian metric, we can choose a nice one. Let $g_B$ be a Riemannian metric on the base $B^{k+1}$ and $\Vol^{k+1}_B$ its induced volume. Then we choose $g$ on $M$ such that the induced $k+1$-dimensional volume of $g$ in $\Ncal$ is larger than its shadow on $B$, i.e.\ 
		\begin{align}
        \Vol^{k+1}_g&\geq \pi^*\Vol^{k+1}_B.\label{projectionofvolume}
    \end{align}
    We denote the set of $b\in B$ for which the growth condition holds by $B_{reg}$, and we denote the trace left behind by $\varphi^tA$ by $A^T:=\bigcup_{t\in[0,T]}\varphi^tA$.
    
    The proof is completed in 6 steps:
    \begin{itemize}
    	\item[Step 1] $\Gamma_{\rm vol}(\varphi^t)\geq \Gamma(\Vol^k(\varphi^tA))$,
    	\item[Step 2] $\Gamma(\Vol^k(\varphi^tA))\geq\Gamma(\Vol^{k+1}(A^T))$
    	\item[Step 3] $\Gamma(\Vol^{k+1}(A^T))\geq \Gamma(\Vol^{k+1}(A^T)\cap\Ncal)$
    	\item[Step 4] $\Gamma(\Vol^{k+1}(A^T)\cap\Ncal)\geq \Gamma(\int_{A^T}\pi^*d\Vol_B)$
    	\item[Step 5] $\Gamma(\int_{A^T}\pi^*d\Vol_B)\geq \Gamma(\int_{B_{reg}} n(A,b,T,\varphi^t) d\Vol_B)$
    	\item[Step 6] $\Gamma(\int_{B_{reg}} n(A,b,T,\varphi^t) d\Vol_B)\geq \alpha$.
    \end{itemize}
    
    Step 1 holds since the supremum is an upper bound.
    
    Step 2 holds since the vector field $X$ generating $\varphi^t$ is bounded above since it is periodic on a compact space:
    $$\Vol^{k+1}(A^T)=\int_0^T \int_{\varphi^tA} \| pr_{(\varphi^tA)^\perp}X\| \;d\Vol^k \;dt\leq \sup \|X\|\int_0^T\Vol^k(\varphi^t A)\;dt.$$
		Then, we note that for $f$ with $\limsup_{t\to\infty}f(t)\geq 1$ we have that $\Gamma(\int_0^Tf(t)\;dt)\leq \Gamma(f(T))$.

    Step 3 holds since volume is monotone under inclusion and since $f\leq g$ implies $\Gamma(f)\leq\Gamma(g)$. Step 4 is an immediate consequence of~(\ref{projectionofvolume}). Step 5 is just a reformulation. 
    
    Step 6 is concluded since we assumed Criterion~\ref{ass} and thus for some $T_0$ we have $n(A,b,T,\varphi^t)\geq e^{\alpha T}$ for all $b\in B$ and $T\geq T_0$. This completes the proof.		

\begin{remark}[Non-autonomous dynamical systems]\label{nonautonomous}
For volume growth $\Gamma_{\rm vol}$ it makes sense to work with the much more general class of non-autonomous dynamical systems, i.e.\ for smooth families of smooth maps
$\varphi^t$ with $\varphi^0=id$, which are not necessarily twisted periodic. The above proof goes through as long as we impose that $X^t:=\dot\varphi^t$ is bounded from above. Also, the proof of Theorem~\ref{thm2} in Section~\ref{sec:poscont} does not rely on twisted periodicity, but only on the Hamiltonian $h$ being uniformly bounded for all time $0<c\leq h\leq C$. 

One can also define topological entropy in this more general setting: A $(T,\delta)$-separated set is a subset $N\subseteq M$ such that for all $n\neq n'\in N$ there is a $t\in[0,T]$ such that $d(\varphi^t(n),\varphi^t(n'))\geq \delta$. Topological entropy is then defined as the exponential growth rate of maximal cardinality of $(T,\delta)$-separated sets:
$$h_{\rm top}(\varphi^t)= \sup_{\delta>0} \Gamma( \mbox{maximal cardinality of a $(T,\delta)$-separated set} ).$$
This generalized notion of topological entropy for non-autonomous systems is studied only in a few papers. For example, in~\cite{KS96} it is shown that in the discrete time setting this definition of generalized topological entropy (which is analogous to the definition of Bowen and Dinaburg) coincides with the definition that generalizes the definition of topological entropy by Adler, Konheim and McAndrew. 
\end{remark}
For this article it is of interest under what conditions the theorems of Yomdin~\cite{Y} and Newhouse~\cite{N} generalize to non-autonomous dynamical systems. 
\begin{question}
How are volume growth and topological entropy related for non-autonomous dynamical systems? 
\end{question}

\section{Proof of Theorem~\ref{thm2}}\label{sec:poscont}

    In this section we show how to obtain Criterion~\ref{ass} in the setup of~\cite{D18}, which proves Theorem~\ref{thm2}. For an outline of the construction of Lagrangian Rabinowitz--Floer homology see the appendix. The main point is the setup of a persistence module for each element of $\Pcal$ and proving that a perturbation of the parameter changes the persistence module Lipschitz--continuously with respect to (logarithmized) interleaving distance. We refer to~\cite{PRSZ} for basic notions on persistence modules.

\subsubsection*{Liouville domains and conical Lagrangians}

    Let $(M,\Lambda,\alpha)$ be a contact manifold with Legendrian $\Lambda$ which is fillable by the triple $(W,L,\lambda)$ consisting of a Liouville domain $(W,\lambda)$ with asymptotically conical exact Lagrangian $L$. This means that $M=\partial W, \Lambda=\partial L= L\cap M, \alpha=\lambda|_{TM}$. 
		
\begin{example}
	The stereotypical example for this setup is the cosphere bundle of a compact manifold $(S^*Q,S_q^*Q,\lambda)$, where $\lambda=pdq$ is the tautological 1-form, together with the Legendrian being a fiber over a point.
	We can realize the cosphere bundle explicitly as the level set of a fiberwise starshaped Hamiltonian function $H$. We can then take the sublevel set as Liouville domain (i.e.\ the codisk bundle $D^*Q$), where a codisk fiber fills $S^*_qQ$ asymptotically conically.		
\end{example}
    
\subsubsection*{Positive contactomorphisms}
		In the introduction we parametrize the class of twisted periodic positive paths of contactomorphisms by positive periodic contact Hamiltonians. However, in view of Remark~\ref{nonautonomous} it makes sense to directly parametrize the set of positive paths of contactomorphisms by the set of bounded positive contact Hamiltonians 
    $$\Pcal=\{h\in C^\infty(M\times\RR,\RR)\mid \exists c,C\in\RR^+:0<c\leq h\leq C\}.$$
    The contact Hamiltonian vector field of such a function generates the smooth family of contactomorphisms $\varphi^t_h$, which are uniformly positively transverse to the contact structure $\ker\alpha$ (since $h>0$). 

    If $h\equiv 1$, then $X_1$ is the Reeb vector field of $\alpha$. If $h$ is autonomous, then $X_h$ is the Reeb vector field of $\frac1h\alpha$. Nonautonomous Hamiltonians generate all paths of contactomorphisms. This is a contrast to symplectic dynamics, where there is an obstruction in $H^1(M;\RR)$ for a path of symplectomorphisms to be Hamiltonian, called flux. The set $Cont^+(M,\ker\alpha)$ of positive contactomorphisms consists of all contactomorphisms that are reached through positive paths of contactomorphisms.

\begin{example}
    Continuing the example of the cosphere bundle $S^*Q$ of a compact manifold, we observe that the Hamiltonian flow of the defining starshaped Hamiltonian is a reparametrization of the Reeb flow on the spherization. If the Hamiltonian is $\frac12\|p\|_g^2$ with respect to some Riemannian metric $g$, then the induced flow on the $\frac12$-level $\Sigma_g$ set is the co-geodesic flow on $Q$. Any other fiberwise starshaped hypersurface $\Sigma$ is graphical over $\Sigma_g$ by radial dilation: $\Sigma=f\Sigma_g$, where $f:\Sigma_g\to\RR_{>0}$ and $\alpha_{\Sigma}=f\alpha_{\Sigma_g}$. Thus, studying characteristic flows of fiberwise starshaped hypersurfaces amounts to the same as studying the set of autonomous contact Hamiltonian flows. 
\end{example}

\subsubsection*{Persistent Rabinowitz--Floer homology}
		Given a Liouville domain and asymptotically conical exact Lagrangian $(W,L,\lambda)$ that is bounded by a contact manifold with a Legendrian $(M,\Lambda,\alpha)$, we say a Hamiltonian $h\in\Pcal$ is regular if $\bigcup_{t\neq 0}\varphi_h^t(\Lambda)\pitchfork \Lambda$. We denote the set of regular Hamiltonians by $\Pcal_{\rm reg}$. Note that $\Pcal_{\rm reg}\subseteq\Pcal$ is comeager. For a regular Hamiltonian we can define the action filtered positive Lagrangian Rabinowitz--Floer chain complex $\RFC_+^T(W,L;h)$. The induced homology is the action filtered positive Lagrangian Rabinowitz--Floer homology
			$$\RFH^T_+(W,L;h).$$
		We drop $W,L$ from the notation if there is no possibility of confusion. 		For the analytical details of this homology, especially the discussion of the differential, we refer to~\cite{D18}. Note that the results below on interleavings require that $h\in\Pcal_{\rm reg}$, whereas the results on topological entropy do not. The reason will become clear in the last paragraph, where perturbations of the Legendrian are discussed. 
		
		Positive Lagrangian Rabinowitz--Floer homology has the following properties, cf.~\cite{D18} and the references therein.
		
\begin{enumerate}
	\item The chain complex is a $\ZZ_2$-vector space generated by chords of the contact Hamiltonian vector field $X_h$ that start and end at $\Lambda$ and have length in $(0,T)$. Therefore,
	$$\dim \RFH^{T}_+(h)\leq\#\{\mbox{$X_h$-chords from $\Lambda$ to $\Lambda$ of lenght $\leq T$}\}.$$
	
	\item \label{infinitePmod} The family of vector spaces $\{\RFH^{T}_+(h)\}_{T\in\RR_{>0}\cup\{+\infty\}}$ is a persistence module: it is a direct system directed by morphisms induced by inclusion of generators for $T\leq T'$,
	$$\RFH^T_+(h)\xrightarrow{\iota_{T,T'}}\RFH^{T'}_+(h),$$
	which satisfy $\iota_{T',T''}\circ\iota_{T,T'}=\iota_{T,T''}$, and for each finite $T$ the chain complex $\RFC^T_+(h)$ is finite dimensional. We define the total homology as the direct limit
	$$\RFH_+(h):=\varinjlim\RFH^T_+(h).$$
	We denote by $\iota_\infty$ the limit morphisms $\iota_\infty = \varinjlim\iota_{T,T'}$.
	
	\item \label{continuations}For $h,k\in\Pcal_{\rm reg}$ with $h\leq k$ pointwise, we have continuation morphisms
	$$\RFC^T_+(h)\xrightarrow{\phi^T_{h,k}} \RFC^T_+(k).$$
	Given $h\leq k\leq g$ the morphisms satisfy $\phi^T_{k,g}\circ\phi^T_{h,k}=\phi^T_{h,g}$. These morphisms commute with the morphisms induced by inclusion and induce an isomorphism in the total homology. In other words, the following diagram  commutes.
\begin{center}
  \begin{tikzcd}
    \RFH^T_+(h) \arrow{dd}{\phi^T_{h,k}} \arrow{r}{\iota_{T,T'}} & \RFH^{T'}_+(h) \arrow{dd}{\phi^{T'}_{h,k}} \arrow{dr}{\iota_{\infty}} & \\
		 & & \RFH_+:=\RFH_+(h)\iso\RFH_+(k)\\
		\RFH^T_+(k) \arrow{r}{\iota_{T,T'}} & \RFH^{T'}_+(k) \arrow{ur}{\iota_{\infty}} & 
  \end{tikzcd}
\end{center}

	\item\label{cofinal} There is a cofinal subfamily of $\Ccal\subseteq\Pcal_{\rm reg}$ that is closed under scaling, i.e.\ $c\in\Ccal,\lambda>0\Rightarrow \lambda c\in\Ccal$, such that continuation morphisms with respect to scaling coincide with persistent morphisms, i.e.\ for $\lambda>0$
	$$\RFH_+^T(\lambda c)\iso\RFH_+^{\lambda T}(c).$$
	Cofinal means that $\forall h,k\in \Pcal$ there exists $c\in\Ccal$ such that $c\geq h$ and $c\geq k$. 
\end{enumerate}

\begin{remark}
	In the following we assume that $1\in\Pcal_{\rm reg}$, so that every positive constant  is in $\Pcal_{\rm reg}$ and we can choose $\Ccal=\RR_{>0}$. If $1\notin\Pcal_{\rm reg},$ then we choose any regular autonomous Hamiltonian and scalings thereof, for they also admit the scaling property. Alternatively we can choose $\Ccal$ to be the set of regular autonomous Hamiltonians. In dynamical terms, autonomous Hamiltonians parametrize positive paths of contactomorphisms that are flows. 
\end{remark}

\subsubsection*{Interleaving stability of persistence modules}
	Remember that two persistence modules $(V,\iota^V),(W,\iota^W)$ are $\delta$ interleaved if there are morphisms of shifted persistence modules $F:V^t\to W^{t+\delta},G:W^t\to V^{t+\delta}$ such that concatenating shifts of $F$ and $G$ yields the continuation morphisms $G^{t+\delta}\circ F^t=\iota^V_{t,t+2\delta},F^{t+\delta}\circ G^t=\iota^W_{t,t+2\delta}$.
	
	We can combine the diagram in~(\ref{continuations}) with the property in~(\ref{cofinal}) to obtain for $0<c\leq h\leq C$ and for every $T$
\[
	\begin{tikzcd}
    \RFH_+^{T}(c) \arrow{r}{\phi^{T}_{c,h}} & \RFH_+^{T}(h)  \arrow{r}{\phi^{T}_{h,C}} & \RFH_+^{T}(C) \iso \RFH_+^{\frac CcT}(c).
  \end{tikzcd}
\]
If we logarithmize the persistence parameter $T=e^\tau$, concatenating the above diagram for $T=T$ and $T=\frac CcT$ yields a $\log\frac Cc$ interleaving of the two persistence modules $\RFH_+^{e^\tau}(h)$ and $\RFH_+^{e^\tau}(c)$:
\[
	\begin{tikzcd}
    \RFH_+^{T}(c) \arrow{d} & \RFH_+^{\frac CcT}(c) \arrow{d}\\ \RFH_+^{T}(h)  \arrow{ur} & \RFH_+^{\frac CcT}(h).
  \end{tikzcd}
\]
Thus, our choice of $\Pcal$ implies that every induced persistence module is interleaved with the persistence module induced by a constant, with interleaving distance given by the logarithmized oscillation of the Hamiltonian. 

\subsubsection*{Infinite persistence module}
	From the persistence module $\RFH_+^T(h)$ we can define a new one by taking the image in the total homology $\iota_\infty \RFH_+^T(h)\subseteq\RFH_+$, where the persistence morphisms are given by $\iota_{T,T'}^\infty\circ\iota_\infty=\iota_\infty\circ\iota_{T,T'}$ and where continuation morphisms for different $h,k$ are given by $\phi_{h,k}^{\infty,T}\circ\iota_\infty=\iota_\infty\circ\phi^T_{h,k}$. This amounts to deleting all finite bars from the associated barcodes. Since all morphisms commute with $\iota_\infty$, the properties above also hold for $\iota_\infty \RFH_+^T(h)$. The interleaving diagram from before becomes
\[
	\begin{tikzcd}
    \iota_\infty\RFH_+^{T}(c) \arrow{d} \arrow{r}{\subseteq} & \iota_\infty\RFH_+^{\frac CcT}(c) \arrow{d} \\ \iota_\infty\RFH_+^{T}(h) \arrow{ur} \arrow{r}{\subseteq} & \iota_\infty\RFH_+^{\frac CcT}(h).
  \end{tikzcd}
\]
All persistence morphisms $\iota^\infty_{T,T'}$ are inclusions as subspaces of $\RFH_+$ and thus for any $h\in\Pcal$ the dimension of $\iota_\infty \RFH_+^T(h)$ is monotone in $T$. Since the above interleaving diagram commutes and since the horizontal arrows are inclusions, the vertical and diagonal arrows must be injections. We conclude that 
\[
	\dim\iota_\infty \RFH_+^{T}(h)\in \left[ \dim\iota_\infty \RFH_+^T(c),\dim\iota_\infty \RFH_+^{\frac CcT}(c)\right],
\]
which implies that 
\[
	\Gamma(\dim\iota_\infty \RFH_+^{T}(h))\geq  \Gamma(\dim\iota_\infty \RFH_+^{T}(c))=c\Gamma(\dim\iota_\infty \RFH_+^{T}(1))
\]
as claimed. 
\begin{remark}
	Note that the bound from above is not interesting since there might be homologically invisible chords, and since the volume growth of the Legendrian submanifold might not realize the supremum in the definition of volume growth. In rare cases this is not the case, as for geodesic flows of hyperbolic manifolds. 
\end{remark}
	
	On the way we have reproved the following classical corollary for the logarithmized persistence parameter:
\begin{corollary}
	The persistence modules $\RFH^{e^\tau}(h)$ and $\RFH^{e^\tau}(\min h)$ are $\log\frac Cc$-interleaved. Likewise, the persistence  modules $\iota_\infty\RFH^{e^\tau}(h)$ and $\iota_\infty\RFH^{e^\tau}(\min h)$ are $\log\frac Cc$-interleaved.
\end{corollary}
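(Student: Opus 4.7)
The plan is to observe that this corollary is essentially an extraction of the interleaving diagrams already constructed in the main body of the proof. The key move is to take the constants $c := \min h$ and $C := \max h$, regarded as elements of the cofinal scaling family $\Ccal$ (or, in the exceptional case where $1 \notin \Pcal_{\rm reg}$, to replace them by suitable scalings of a fixed reference regular autonomous Hamiltonian, as per the remark preceding the corollary). Then the pointwise inequality $c \leq h \leq C$ holds by definition, which is exactly the hypothesis needed to invoke property~(\ref{continuations}).

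First I would combine the monotonicity of continuation morphisms in property~(\ref{continuations}) with the scaling isomorphism of property~(\ref{cofinal}) to write down the two composites
\[
\RFH_+^T(c) \xrightarrow{\phi^T_{c,h}} \RFH_+^T(h) \xrightarrow{\phi^T_{h,C}} \RFH_+^T(C) \iso \RFH_+^{\frac{C}{c}T}(c),
\]
together with its shifted version starting at $h$. Setting $\tau := \log T$ turns the scaling $T \mapsto \frac{C}{c}T$ into the shift $\tau \mapsto \tau + \log\frac{C}{c}$, so the two composites become the maps
\[
F^\tau : \RFH_+^{e^\tau}(c) \to \RFH_+^{e^{\tau + \log(C/c)}}(h), \qquad
G^\tau : \RFH_+^{e^\tau}(h) \to \RFH_+^{e^{\tau + \log(C/c)}}(c)
\]
required by the definition of $\log\frac{C}{c}$-interleaving. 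The cocycle identity $\phi_{k,g} \circ \phi_{h,k} = \phi_{h,g}$ in~(\ref{continuations}), together with the commutativity between continuation morphisms and persistence inclusions, guarantees that the two compositions $G^{\tau+\log(C/c)} \circ F^\tau$ and $F^{\tau+\log(C/c)} \circ G^\tau$ recover the persistence maps $\iota_{\tau,\tau+2\log(C/c)}$ on each side.

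For the second assertion concerning $\iota_\infty\RFH$, I would simply postcompose every map in the above diagram with $\iota_\infty$. The definition of the infinite persistence module in the paragraph preceding the corollary forces $\iota_\infty$ to intertwine both the persistence morphisms and the continuation morphisms, so the interleaving data $F,G$ descend to interleaving data on the $\iota_\infty$-images without any further work.

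The only potential obstacle is purely bookkeeping: making sure that the constants $\min h$ and $\max h$ can be legitimately used as Hamiltonians defining filtered Rabinowitz--Floer homology, but this is handled precisely by the remark inserted before the corollary, which reduces to choosing a regular reference Hamiltonian in $\Ccal$ if $1 \notin \Pcal_{\rm reg}$. Beyond that, the corollary is a direct reading of the interleaving diagrams already displayed in the proof of Theorem~\ref{thm2}, and no additional analytical input is required.
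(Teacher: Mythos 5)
Your proposal is correct and follows essentially the same route as the paper: choosing $c=\min h$, $C=\max h$, concatenating the continuation morphisms from property~(\ref{continuations}) with the scaling isomorphism from property~(\ref{cofinal}), logarithmizing the persistence parameter to convert multiplicative scaling into an additive shift, and then passing to $\iota_\infty$-images using the compatibility of all morphisms with $\iota_\infty$. The paper itself introduces this corollary with the words ``On the way we have reproved,'' confirming that it is exactly the byproduct of the interleaving diagrams you invoke.
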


\subsubsection*{Connection with wrapped homology}
	If $1\in \Pcal_{\rm reg}$, then  the positive Lagrangian Rabinowitz--Floer homology of the constant Hamiltonian 1 (which induces the Reeb flow of $\alpha$) is isomorphic to the positive part of wrapped Floer homology, cf.~\cite{D18}. That is, if $0\leq a$, then 
$${\rm WH}^{(0,a)}(W,L)\iso\RFH_+^{a}(1).$$
This isomorphism is analogous to the isomorphism between Rabinowitz--Floer homology and symplectic homology~\cite{CFO10}. 

This is of much interest to our situation, since in some cases wrapped Floer homology is more computable than Lagrangian Rabinowitz--Floer homology. In particular, wrapped Floer homology admits a Pontrjagin product, which can be used to study the dimensional growth $\Gamma^{\rm symp}(W,L)$ of ${\rm WH}^{(0,a)}(W,L)$. This is used in~\cite{AM17} to find examples of contact manifolds different from cotangent bundles such that every Reeb flow has positive topological entropy. By the above isomorphism, also all positive contactomorphisms on these spaces have positive topological entropy.

\subsubsection*{Stability of chord counting under perturbations of the Legendrian}
As the last step in the construction, we want to make a statement about a generic nearby Legendrian of $\Lambda$. For this we take the following result from~\cite[Proposition 1.7]{D18}. 
\begin{proposition}\label{changeLegendrian}
	Let $\Lambda'$ be a Legendrian that is isotopic through Legendrians to $\Lambda$. Let $\psi$ be a contactomorphism that takes $\Lambda$ to $\Lambda'$ so that $(\psi^{-1})^*\alpha=f\alpha$. Then the exponential growth of the number of $\varphi^t$-chords from $\Lambda$ to $\Lambda'$ of length $\leq T$ is at least $\min f \cdot\min h\cdot\Gamma^{\rm symp}(W,L)$.
\end{proposition}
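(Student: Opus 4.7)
The plan is to bound $n(\Lambda,\Lambda',T,\varphi_h^t)$ from below by the growth of a mixed-boundary variant of positive Lagrangian Rabinowitz--Floer homology, and then absorb the two perturbations independently: the Hamiltonian perturbation $1\rightsquigarrow h$, yielding the factor $\min h$, and the Legendrian perturbation $\Lambda\rightsquigarrow\Lambda'=\psi(\Lambda)$, yielding the factor $\min f$. The endpoint of the chain of inequalities is the growth $\Gamma^{\rm symp}(W,L)$ via the wrapped-Floer/Rabinowitz identification $\iota_\infty\RFH_+^T(1)\cong{\rm WH}^{(0,T)}(W,L)$ recalled at the end of Section~\ref{sec:poscont}.

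First I would introduce $\RFH_+^T(W;L,\psi(L);h)$, defined exactly as $\RFH_+^T(W,L;h)$ but with the target Lagrangian replaced by $\psi(L)$. Its generators are $X_h$-chords from $\Lambda$ to $\Lambda'$ of length at most $T$, so property~(1) of Section~\ref{sec:poscont} gives
\[
    n(\Lambda,\Lambda',T,\varphi_h^t)\ \geq\ \dim\iota_\infty\RFH_+^T(W;L,\psi(L);h).
\]
Properties~(\ref{continuations}) and~(\ref{cofinal}) carry over verbatim to this mixed setting, since both arguments only use monotone continuations in the Hamiltonian variable and are insensitive to the chosen boundary Lagrangians. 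The interleaving machinery of Section~\ref{sec:poscont} therefore yields
\[
    \Gamma\bigl(\dim\iota_\infty\RFH_+^T(W;L,\psi(L);h)\bigr)\ \geq\ \min h\cdot\Gamma\bigl(\dim\iota_\infty\RFH_+^T(W;L,\psi(L);1)\bigr).
\]
Next, $(\psi^{-1})^*\alpha=f\alpha$ makes $\psi^{-1}\colon(M,f\alpha)\to(M,\alpha)$ a strict contactomorphism sending $\Lambda'$ to $\Lambda$. Extending $\psi^{-1}$ to an exact symplectomorphism of the Liouville completions which carries $\psi(L)$ to $L$ produces a filtration-preserving identification of $\iota_\infty\RFH_+^T(W;L,\psi(L);1_\alpha)$ with the analogous persistence module built from the contact form $f\alpha$ and the Lagrangian pair $(\psi^{-1}L,L)$. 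Rerunning the scaling--interleaving argument in the contact-form variable, using $f\alpha\geq(\min f)\alpha$ in place of $h\geq\min h$, and invoking Legendrian-isotopy invariance of positive wrapped Floer homology to replace $\psi^{-1}(L)$ by $L$ at a filtration cost again governed by $f$, I arrive at
\[
    \Gamma\bigl(\dim\iota_\infty\RFH_+^T(W;L,\psi(L);1)\bigr)\ \geq\ \min f\cdot\Gamma\bigl(\dim\iota_\infty\RFH_+^T(W;L,L;1)\bigr)=\min f\cdot\Gamma^{\rm symp}(W,L).
\]
Concatenating the three inequalities gives the claim.

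The hard part is this second step: one must verify that the pointwise geometric bijection of chords induced by $\psi^{-1}$ lifts to a chain-level isomorphism of filtered Rabinowitz--Floer complexes defined with respect to two different contact forms on the symplectization boundary, and then check that the combined effect of rescaling $\alpha\rightsquigarrow f\alpha$ and Legendrian-isotoping $\psi^{-1}(L)\rightsquigarrow L$ amounts to a single interleaving with shift controlled by $\min f$. Once this contact-form interleaving is in place, it runs in complete parallel to the Hamiltonian interleaving of Section~\ref{sec:poscont}, and the two multiplicative factors combine cleanly to give $\min f\cdot\min h\cdot\Gamma^{\rm symp}(W,L)$.
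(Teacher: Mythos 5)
Your route is genuinely different from the paper's, and while the overall bookkeeping of factors ($\min h$ from the Hamiltonian, $\min f$ from the contactomorphism) matches, the paper's proof is much more economical because it never leaves the $\Lambda$--to--$\Lambda$ framework. Rather than introducing a mixed-boundary Rabinowitz--Floer theory $\RFH_+^T(W;L,\psi(L);h)$ and building an interleaving in the contact-form variable, the paper constructs a single auxiliary contact Hamiltonian $g$: for $t\in[0,1]$ it generates the inverse isotopy $\psi_t^{-1}$ (so that $\Lambda\mapsto\psi_1^{-1}\Lambda$), and for $t\geq1$ it generates the conjugated vector field $D\psi_1^{-1}X_h$. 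The contact Hamiltonian transformation formula gives $g|_{t\geq1}=f\cdot(h\circ\psi)\geq\min f\cdot\min h$, and the only remaining issue (smoothness and positivity on $[0,1]$, where $g$ may be negative or non-smooth) is fixed by a convex combination with a suitably reparametrized Reeb flow whose effect on the action is $O(1)$. Chords of $g$ from $\Lambda$ to $\Lambda$ of length $\geq T_0$ then biject with chords of $h$ from $\Lambda$ to $\Lambda'$ of length $\geq T_0-1$, and Theorem~\ref{thm2} applied to $g$ gives exactly the claimed bound. No new homology theory is needed.

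The ``hard part'' you flag is indeed where your argument is incomplete, and there is a concrete pitfall hiding in it: you invoke \emph{two} steps each with a cost governed by $f$ --- the contact-form rescaling $\alpha\rightsquigarrow f\alpha$ and the Legendrian isotopy $\psi^{-1}(L)\rightsquigarrow L$ --- but conclude a single factor $\min f$. These two deformations are not independent; they are the same change of data seen through conjugation by $\psi^{-1}$, and disentangling them cleanly so that the cost does not double would require exactly the kind of careful chord-level bookkeeping that the paper performs directly via the Hamiltonian $g$. Without that, the inequality as you've written it is not justified. If you do want to pursue your route, the efficient fix is to observe (as the paper does, and as is noted in Section~\ref{sec:poscont}) that the Reeb flow of $\frac1f\alpha$ is the contact Hamiltonian flow of $f$ with respect to $\alpha$, so the ``contact-form interleaving'' collapses into another instance of the Hamiltonian interleaving --- at which point you are essentially reconstructing the auxiliary Hamiltonian $g$ and the argument coincides with the paper's.
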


We sketch the proof.
Suppose that a Legendrian $\Lambda$ is perturbed through an isotopy of Legendrians to a nearby Legendrian $\Lambda'$. The Legendrian isotopy can be extended to a path of contactomorphisms $\{\psi_t\}_{t\in[0,1]}$. The $X_h$ chords from $\Lambda$ to $\Lambda'$ are in 1-1 correspondence with $D(\psi_1)^{-1}X_h$ chords from $(\psi_1)^{-1}\Lambda$ to $\Lambda$. 

We can define a new Hamiltonian $g$ which generates $(\psi_t)^{-1}$ for time $t\in[0,1]$ and thus takes $\Lambda$ to $(\psi_1)^{-1}\Lambda$ in time $1$, and then generates $D(\psi_1)^{-1}X_h$. The transformation formula of the contact Hamiltonian tells us that 
$$g(x,t)|_{t\geq 1}=\alpha_x (D(\psi_1)^{-1}X_h(\psi(x),t-1))=f(x) h(\psi(x),t-1)$$ and therefore 
$g|_{t\geq 1}\geq \min f \cdot\min h$. Unfortunately, the Hamiltonian defined this way is not nessessarily smooth at 1 and is not necessarily positive in $[0,1]$. 

Both problems can be solved by convex combination with a Reeb flow, which strongly flows forward in time $[0,1]$ and then is reverted in time $[1,T_0]$ for $T_0$ large enough such that combination with $g$ affects the lower bounds only a little. The chords from $\Lambda$ to $\Lambda$ of the resulting path of contactomorphisms of length $\geq T_0$ are in 1-1 correspondence with  the chords of length $\geq T_0-1$ from $\Lambda$ to $\Lambda'$, where the correspondence shifts the period by 1, which proves the proposition.

If the deformation of the Legendrian is small, then $\min f$ is close to 1. A convenient consequence of this perturbative stability is that for almost all deformations $\Lambda'$ of $\Lambda$ the Hamiltonian $g$ produced in the proof of Proposition~\ref{changeLegendrian} is regular, $g\in\Pcal_{\rm reg}$. Thus, the conclusion of Theorem~\ref{thm2} holds for all Hamiltonians $h\in\Pcal$.

\section{Appendix: Lagrangian non-autonomous Rabinowitz--Floer homology}\label{appendix}

In this appendix, we briefly describe the construction of Lagrangian Rabinowitz--Floer homology for non-autonomous Hamiltonians. Since there are many details that go beyond the scope of this article, we refer the interested reader for proofs and more details to~\cite{D18} and the references therein.

\subsubsection*{Geometric setup}

    As in Section~\ref{sec:poscont}, let $(W,L,\lambda)$ be a Liouville domain with asymptotically conical  Lagrangian filling the contact manifold  with Legendrian $(M,\Lambda,\alpha)$. The completion $(\tiilde W,\tiilde L,\lambda)$ of $(W,L,\lambda)$ is obtained by gluing in $(M\times[1,\infty),\Lambda\times[1,\infty),d(r\alpha))$ using the Liouville vector field. We denote the radial coordinate by~$r$. 

\subsubsection*{The action functional}
Let $H^t:W\times \RR\to \RR$ be a time-dependent smooth function on $W$ (later $H$ is specifically chosen). We define the action functional $\Acal_{H^t}:\Omega(L)\times \RR\to\RR$, where $\Omega(L)$ denotes the Hilbert manifold of $W^{1,2}$ paths $x:[0,1]\to W$ that start and end at $L$, by 
\begin{eqnarray}\label{functional}
	\Acal_{H^t}(x,\eta)&=&\frac1\kappa \left(\int_0^1x^*\lambda-\eta\int_0^1H^{\eta t}(x(t))\;dt\right),
\end{eqnarray}
where $\kappa$ is some positive constant discussed later. A pair $(x,\eta)$ is a critical point of $\Acal_{H^t}$ if and only if it satisfies the equations
\begin{equation*}\label{crit:lrfh}
\left\{\begin{array}{rcl}
		\dot x(t)&=&\eta X_{H^{\eta t}}(x(t)),\\
		H^\eta(x(1))&=&0,
	\end{array}\right.
\end{equation*}
where $X_{H^t}$ is the Hamiltonian vector field generated by $H^t$.
The first equation implies that $x$ is an orbit of $X_{H^t}$, but with time scaled by $\eta$. The second equation, which one deduces by integration by parts, implies that the orbit ends on $(H^{\eta})^{-1}(0)$. If $H^t=H$ is autonomous, then $H^{-1}(0)$ is a hypersurface for which $\eta$ plays the role of a Lagrange multiplier, and $H(x(t))=0$ for all $t$. For time-dependent $H^t$, however, there is no such hypersurface, and $H^{\eta t}(x(t))$ might be very large or very small for $t<1$.

If $H$ depends only on $r$, $H(r)=0$ only for $r=1$ and $H'(1)=1$, then $\Crit\Acal_H$ is the set of Reeb orbits from $\Lambda$ to $\Lambda$ with period $\eta$ (running backwards if $\eta<0$), and $\Acal_H(x,\eta)=\eta$ at critical points. Note that for $\eta=0$ the critical points are constant orbits that form the critical manifold $\Lambda=L\cap H^{-1}(0)$. Thus, $\Acal$ is never Morse. If $(W,L)$ is regular, all critical points with $\eta\neq0$ are regular and the critical manifold at $\eta=0$ is Morse--Bott. Since we only have one nontrivial critical manifold, we do not focus on the Morse--Bott situation. We choose a Morse function on $\Lambda$ and abusing notation we denote by $\Crit\Acal_H$ the union of the isolated critical points of~$\Acal_H$ with the critical points of the Morse function. The action of a critical point of the Morse function is defined to be the value of the functional $\Acal_H$ on the critical component, i.e.~$0=\Acal_H(\Lambda)$.

\subsubsection*{The choice of Hamiltonian}

For the study of positive contactomorphisms~$\varphi$ it is crucial to carefully construct Hamiltonians~$H^t$ on $W$ in such a way that 
\begin{itemize}
	\item critical points of $\Acal_{H^t}$ encode dynamical information on $\varphi$,
	\item the resulting homology is well defined (i.e.\ there are no problems with compactness and transversality),
	\item the continuation morphisms for monotone deformations of $H^t$ are monotone with respect to the action.
\end{itemize}

Our main objective is to study a positive contactomorphism~$\varphi$ of $(M,\alpha)$. To do so, choose a positive path of contactomorphisms $\varphi^t$ such that $\varphi^0=id,\;\varphi^1=\varphi$. By convex combination with a Reeb flow (see~\cite[Proposition 6.2]{FLS15} for more details), $\varphi^t$ can be deformed with fixed endpoints to $\tilde\varphi^t$ such that $\tilde\varphi^t$ is the Reeb flow of $\alpha$ for $t$ near $0$ and $1$. This means that the contact Hamiltonian $h^t$ on $M$ generating~$\varphi^t$ is constant $\equiv 1$ for $t$ near $0$ and $1$. This deformation can be performed such that the order is preserved: $\tilde h_1^t\leq\tilde h_2^t$ whenever $h_1^t\leq h_2^t$. From now on we assume that this deformation is already performed if not stated differently. Consequently, the concatenation of positive contactomorphisms is a positive contactomorphism. 

Furthermore, the fact that $h^t$ is constant near $0$ and $1$ allows us to extend $h^t$ to $t\in\RR$ periodically or constantly. We always choose a mixed extension: $h^t\equiv1$ for $t\leq0$, and for positive time we choose an extension such that $h^t$ and its derivative are uniformly bounded. In the actual applications the choice will be that $h^t$ is periodic for $t\geq0$ and thus the longterm behavior of $\varphi^t_{h^t}$ coincides with the behavior of iterations of $\varphi$ since $\varphi^k_{h^t}=\varphi^k, k\in\NN$. The choice for negative time is necessary for our proof of monotonicity of continuation morphisms. 
%

We collect our assumptions on the Hamiltonian:
\begin{assumption}\label{classofh}
	The contact Hamiltonian $h^t:M\times\RR\to\RR$ satisfies for some positive constants $c,C,c'$
	\begin{itemize}
		\item $h^t\equiv 1$ for $t\leq0$,
		\item $0<c\leq h^t\leq C$ and $|\frac d{dt}h^t|\leq c'$,
		\item  $\bigcup_{t\neq 0}\varphi^t_{h^t}\Lambda$ and $\Lambda$ intersect transversely.
	\end{itemize}
\end{assumption}

To work in the Liouville domain we construct from the contact Hamiltonian $h^t$ on $M$ a Hamiltonian $H^t$ on $\haat W$ in a uniform way, depending on two large enough parameters $\kappa$ and $K$, which we choose for every finite action window individually. The flow lines of $X_{H^t}$ are lifts of $\varphi^t$-flow lines if we set on $M\times\RR^{> 0}$
\begin{equation}
H^t:=rh^t-\kappa.
\label{primitivesH}
\end{equation}
 For such an $H^t$ the critical points of $\Acal_{H^t}$ end in $\{r=\kappa/h^\eta\}$. Changing $\kappa$ does not change critical points in an essential way (provided they do not run into $r=0$), but translates them in the $r$-direction. In order to have a smooth Hamiltonian on $\haat W$ and to get compactness of moduli spaces later on, we deform $H^t$ to depend only on $r$ but not on $t$ for $r\leq1-\delta$ and $r\geq\kappa K+1$. To make this precise, we choose independently of the action window constants $\mu_-\leq\min\{h^t(x)\mid x\in M\}, \mu_+\geq\max\{h^t(x)\mid x\in M\}$ which we gather in a function $\mu(r)=\mu_-$ for $r\leq 1$ and $\mu(r)=\mu_+$ for $r\geq \kappa K$, a convex smooth function $\rho:\RR^{\geq 0}\to\RR,$ that will play the role of the radius smoothed out over $W$, and a smooth function $\beta:\RR^{\geq0}\to[0,1]$, that will serve as a transition parameter, such that


\begin{eqnarray*}
	\rho(r)&=&\begin{cases} 1-\frac23\delta &\quad\mbox{if}\quad r\leq1-\delta,\\r&\quad\mbox{if}\quad r\geq 1,\end{cases}\\
	\beta(r)&=&\begin{cases}0&\quad\mbox{if}\quad r\leq1-\delta \quad \mbox{or}\quad r\geq \kappa K+1,\\1&\quad\mbox{if}\quad 1\leq r\leq \kappa K,\end{cases}\\
	\beta'(r) &\in&\begin{cases} [0,\frac2\delta]&\quad\mbox{if}\quad r\leq1,\\ [-2,0]&\quad\mbox{if}\quad r\geq \kappa K.\end{cases}
\end{eqnarray*}
Then we define the Hamiltonian
\begin{eqnarray*}
	H^t_{h^t,\kappa,K}(x,r)&=&\rho(r)\Big(\beta(r)\,h^t(x)+(1-\beta(r))\,\mu\Big)-\kappa.
\end{eqnarray*}
The factor $\frac1\kappa$ in Definition~(\ref{functional}) does not influence the critical points, but only their action values. In fact, the following lemma shows that for $\kappa,K$ large enough, the critical points (up to translation in the $r$-direction) and their actions do not depend on the choice of the constants.

\begin{lemma}\label{independence}
	Let $h^t$ satisfy Assumption~\ref{classofh}. Given $a<b$, there are constants $\kappa_0,K_0$ such that for $\kappa\geq\kappa_0$ and $K\geq K_0$ the following holds. If $(x,\eta)$ is a critical point with $a\leq\Acal_{H^t_{h^t,\kappa,K}}(x,\eta)\leq b$, then the radial component of $x$ stays in $[1,K\kappa]$ for $t\in[0,1]$ and $\Acal_{H^t_{h^t,\kappa,K}}(x,\eta)=\eta$. 
\end{lemma}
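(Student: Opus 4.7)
My plan has two movements: first, to establish the action identity $\Acal_{H^t}(x,\eta) = \eta$ for critical orbits confined to the nice region $r\in[1,K\kappa]$; second, to rule out escape from that region, assuming the action lies in $[a,b]$ and $\kappa,K$ are sufficiently large.

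For the first movement, on the symplectization coordinates $(r,x)$ of the conical end we have $\lambda = r\alpha$, and for $H = rh^t-\kappa$ a direct symplectic-geometry computation shows $X_H = X_{h^t} - r\,dh^t(R_\alpha)\partial_r$, where $X_{h^t}$ is the contact Hamiltonian vector field of $h^t$. In particular $\lambda(X_H) = r\alpha(X_{h^t}) = rh^t$, hence $\lambda(X_H) - H \equiv \kappa$ on the nice region. Integrating the critical-point equation $\dot x = \eta X_{H^{\eta t}}$ along any orbit that stays in $\{r\in[1,K\kappa]\}$ then yields
\[
\int_0^1 x^*\lambda - \eta\int_0^1 H^{\eta t}(x(t))\,dt = \eta\int_0^1 \bigl(\lambda(X_{H^{\eta t}}) - H^{\eta t}\bigr)(x(t))\,dt = \eta\kappa,
\]
so $\Acal_{H^t}(x,\eta) = \eta$.

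For the second movement I would first locate $x(1)$ using $H^\eta(x(1))=0$ by inspecting each of the five regions determined by the smoothing functions $\rho$ and $\beta$. The filling $\{r\leq 1-\delta\}$ is excluded since $H\equiv(1-\tfrac{2}{3}\delta)\mu_- - \kappa < 0$ for $\kappa$ large; the upper transition $\{r\in[K\kappa,K\kappa+1]\}$ is excluded by the lower bound $H\geq K\kappa\cdot c - \kappa > 0$ for $K>1/c$; the outer region is excluded because $H=0$ forces $r=\kappa/\mu_+$, contradicting $r\geq K\kappa+1$ for $K>1/\mu_+$; and the lower transition $\{r\in[1-\delta,1]\}$ is handled analogously. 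Only the nice region survives, with $r(1) = \kappa/h^\eta(x(1))\in[\kappa/C,\kappa/c]$, sitting strictly inside $[1,K\kappa]$ as soon as $\kappa\geq C$ and $K\geq 1/c$. A uniform a priori bound $|\eta|\leq N=N(a,b,h)$, combined with the radial ODE $\dot r = -\eta r\,dh^{\eta t}(R_\alpha)$ and Gr\"onwall propagated backward from $r(1)$, then yields $r(t)\in[\kappa e^{-NM}/C,\kappa e^{NM}/c]\subset[1,K\kappa]$ for $\kappa_0:=Ce^{NM}$ and $K_0:=e^{NM}/c$ (with $M:=\sup|dh^t|$); a first-exit-time argument (if $r(t^*)\in\{1,K\kappa\}$ for some $t^*$, apply the Gr\"onwall bound on $[t^*,1]$ and derive a contradiction) upgrades this pointwise estimate to an orbit-wide statement, after which the identity of the first movement supplies $\Acal=\eta$.

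The principal obstacle is the a priori $\eta$-bound. Naively writing $\Acal_{H^t}(x,\eta) = \frac{\eta}{\kappa}\int_0^1(\lambda(X_{H^{\eta t}})-H^{\eta t})\,dt$ suggests $|\Acal|\sim|\eta|$ via $\lambda(X_H)-H\approx\kappa$, but on the upper transition a correction $r^2\beta'(r)(h^t-\mu_+)$ arises with $r$ of order $K\kappa$, so the naive deviation from $\kappa$ is potentially of order $K^2\kappa^2$. Choosing $\mu_+\geq\sup h^t$ makes this correction sign-definite, which (with an analogous lower-transition estimate) yields one-sided control on $\Acal-\eta$ that, together with the action bound $|\Acal|\leq\max(|a|,|b|)$, produces the required $N$.
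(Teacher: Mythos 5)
Your argument is correct and constitutes a complete, self-contained proof. For comparison: the paper itself does not prove Lemma~\ref{independence} at all — its ``proof'' is a one-line citation of \cite[Proposition~4.3]{AF12}, stated for cotangent bundles and asserted to carry over verbatim. What you wrote is a faithful reconstruction of that standard Rabinowitz--Floer argument: computing $\lambda(X_{H^t})-H^t = dH^t(Y)-H^t$ (with $Y$ the Liouville vector field) region by region, locating $x(1)$ from $H^\eta(x(1))=0$, deriving the a~priori $\eta$-bound, and confining the orbit by Gr\"onwall. Your identification of the sign-definiteness of the upper-transition correction $r^2\beta'(h^t-\mu_+)$ as the crucial observation for the $\eta$-bound is exactly right: together with the ($\kappa$-independent) boundedness of the lower corrections it gives $\lambda(X_{H^t})-H^t\geq\kappa-C_0$, hence $|\eta|\leq 2|\Acal|\leq 2\max(|a|,|b|)$ once $\kappa\geq 2C_0$.

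Two small points deserve sharpening. In the lower transition the term $(r\rho'-\rho)\bigl(\beta h^t+(1-\beta)\mu_-\bigr)$ is \emph{nonpositive}, so it is not ``sign-definite'' in the favorable direction; what saves you there is instead that $|r\rho'-\rho|\leq 1$ (by convexity of $\rho$ and $\rho(1)=\rho'(1)=1$), so the deviation from $\kappa$ is uniformly bounded independent of $\kappa,K$ — a different mechanism than on the upper transition, worth distinguishing. And the first-exit-time step should be phrased with $t^*=\sup\{t\in[0,1):r(t)\in\{1,K\kappa\}\}$: only then does the radial ODE $\dot r=-\eta r\,dh^{\eta t}(R_\alpha)$ hold on all of $(t^*,1]$ so that Gr\"onwall applies there. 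One should also note that $M=\sup_{t\in\RR,\,x}|dh^t_x(R_\alpha)|$ is finite thanks to the constancy for $t\leq 0$ and periodicity for $t\geq0$ built into the extension of $h^t$, since Assumption~\ref{classofh} by itself only bounds $h^t$ and $\partial_t h^t$.
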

\begin{proof} 
	A detailed proof and explicit constants $\kappa_0,K_0$ are given in~\cite[Proposition 4.3]{AF12} in the setup of cotangent bundles. It applies verbatim in the setting of Liouville domains.
\end{proof} 

In the following we abbreviate $\Acal_{H^t_{h^t,\kappa,K}}=\Acal$.



\subsubsection*{Moduli spaces of Floer strips}
We choose an asymptotically conical almost complex structure $J$ on $\haat W$. It induces the following inner product on $\Omega(L)\times\RR$
\begin{equation*}
	\langle(\hat x_1,\hat\eta_1),(\hat x_2,\hat\eta_2)\rangle=\int_0^1\omega(\hat x_1,J\hat x_2)\; dt+\hat\eta_1\hat\eta_2.
\end{equation*}
For this inner product the $L^2$-gradient equation of $\Acal$ is the Rabinowitz--Floer equation
\begin{equation*}
	\begin{cases}
		\partial_s x+J(x)[\partial_t x-\eta X_H(x(s,t))]=0,\\
		\partial_s\eta +\int_0^1H(x(s,t))\;dt=0.
	\end{cases}
\end{equation*}
In addition we choose a Riemannian metric $g$ on the only nontrivial critical manifold $\Lambda\times\{\eta=0\}$. For two critical points $(x_1,\eta_1)$ and $(x_2,\eta_2)$ of $\Acal$ we consider the moduli space $\tiilde\Mcal((x_1,\eta_1),(x_2,\eta_2),H,J)$ of gradient flow lines with cascades from $(x_1,\eta_1)$ to $(x_2,\eta_2)$. For more information on flow lines with cascades, see~\cite{CF09}. We denote the subset where the linearization of the Floer equation has Fredholm index $k$ by 
$$\tiilde\Mcal^k((x_1,\eta_1),(x_2,\eta_2),H,J).$$ On this space there is a natural action by $s$-translation of the domain (if there are multiple cascades, then there is one such action per cascade). We take the quotient by this action to obtain the reduced moduli space of gradient flow lines 
$$\Mcal^{k-1}((x_1,\eta_1),(x_2,\eta_2),H,J)$$ 
(if there are multiple cascades, then the dimensional shift is by the number of cascades). 

For regular $(W,L)$ and generic $J$ and $g$ we have transversality and compactness modulo breaking for these moduli-spaces. 

\subsubsection*{Chain complex}
In view of Lemma~\ref{independence}, for a given choice of constants the homology can only behave well in a finite action window. Let the constants $K,\kappa$ be large enough for the action window $(a,b)$ as in Lemma~\ref{independence}. We define the Rabinowitz--Floer chain groups $\RFC^a(h^t,\kappa,K)$ as free $\ZZ_2$-module generated over $\Crit\Acal$,
\begin{equation*}
	\RFC^a(h^t,\kappa,K)=\sum_{(x,\eta)\in\Crit\Acal,\;\Acal(x,\eta)<a}\ZZ_2\cdot(x,\eta),
\end{equation*}
and the Rabinowitz--Floer chain groups for an action window $\RFC^{(a,b)}(h^t,\kappa,K)$ as the quotient
\begin{equation*}
	\RFC^{(a,b)}(h^t,\kappa,K)=\RFC^b(h^t,\kappa,K)/\RFC^a(h^t,\kappa,K).
\end{equation*}
The differential is defined by counting modulo $\ZZ_2$ isolated Rabinowitz--Floer-strips:
\begin{equation*}
	\partial(x,\eta)=\sum_{(x',\eta')\in\Crit\Acal}\#_{\ZZ_2}\Mcal^0((x,\eta),(x',\eta'),H,J)\cdot (x',\eta').
\end{equation*}
By a gluing argument we can identify $\partial^2$ with counting broken cascades in $\partial\Mcal^1$, whose cardinality is zero modulo $2$. Thus, we can define $\RFH^{(a,b)}(h^t,\kappa,K)$ as the filtered homology of this chain complex. These groups are independent of $\kappa\geq\kappa_0,K\geq K_0$, which is why we denote them by $\RFH^{(a,b)}(h^t)$ for brevity. 

\subsubsection*{Extension of action windows and definition of the homology}

To change the action windows, let $a\leq a', b\leq b'$ and choose constants $\kappa,K$ large enough so that we can use the same functional for both action windows $(a,b)$ and $(a',b')$. There are homomorphisms induced by inclusion of generators $\RFH^{(a,b)}(h^t)\to\RFH^{(a',b')}(h^t)$ which are independent of the specific choice of $\kappa,K$. We define $\RFH^{(-\infty,b)}$ as the inverse limit, $\RFH^{(a,\infty)}(h^t)$ as the direct limit and $\RFH(h^t)=\RFH^{(-\infty,\infty)}(h^t)$ as direct inverse limit (in this order, to preserve exactness of long exact sequences), while adjusting $\kappa,K$. The quotient of infinite interval homologies reproduces finite interval homologies: 
$$\RFH^{(a,b)}(h^t)=\RFH^{(-\infty,b)}(h^t)/\RFH^{(-\infty,a)}(h^t).$$
 We denote by 
$$\RFH_+^T(h^t):=\RFH^{(0,T)}(h^t)$$ 
the positive part of the homology and by $\iota_\infty$ the homomorphisms $\iota_\infty:\RFH_+^T(h^t)\to\RFH_+^\infty(h^t)$ induced by inclusion.

\subsubsection*{Invariance properties}
To change from one Hamiltonian to an other one, consider now a family of Hamiltonians $h_s^t, s\in\RR,$ such that $\partial_sh_s^t$ is supported in $s\in[0,1]$. Suppose that for the associated family of functionals $\Acal_s:=\Acal_{h^t_s,\kappa,K}$ the constants $\kappa,K$ are chosen uniformly large enough for $[a,b]$. We set $\Acal_-=\Acal_s$ for $s\leq0$ and $\Acal_+=\Acal_s$ for $s\geq1$, and similarly $h_-^t,h_+^t$. The continuation homomorphism $\Phi:\RFC(h_-^t)\to\RFC(h_+^t)$ is defined by counting the 0-dimensional components of the moduli space of curves $(x_s,\eta_s)$ that satisfy the equation
\begin{eqnarray}\label{conteqn}
	\partial_s(x_s,\eta_s)&=&-\nabla\Acal_s(x_s,\eta_s),
\end{eqnarray}
such that $\lim_{s\to\pm\infty}(x_s,\eta_s)=(x_\pm,\eta_\pm)$ for critical points $(x_\pm,\eta_\pm)$ of $\Acal_\pm$. Then $\Phi$ induces an isomorphism $\RFH(h_-^t)\to\RFH(h_+^t)$, because $\eta$ is bounded along deformations, and actually does not depend on the homotopy $h_s$ but only on the endpoints~$h_\pm$.

Unfortunately, this isomorphism does not respect the action the filtration of the homology. We therefore restrict our attention to monotone deformations, i.e.\ $\partial_sh_s^t(x)\geq0\;\forall\; s,t,x$. The following proposition says that such monotone deformations are compatible with the action filtration. 
In the proof it is essentially used that we extend $h^t$ to be constant for $t\leq0$.

\begin{proposition}[Monotonicity]\label{monotonicity}
	Let $h_-^t\leq h_+^t$ be two time-dependent positive contact Hamiltonians that satisfy Assumption~\ref{classofh}. Then the continuation homomorphism 
	$$\Phi:\RFH(h_-^t)\to \RFH(h_+^t)$$
	restricts for every $a$ to 
	$$\Phi|_{\RFC^{(-\infty,a)}(h_-^t)}:\RFH^{(-\infty,a)}(h_-^t)\to \RFH^{(-\infty,a)}(h_+^t).$$
\end{proposition}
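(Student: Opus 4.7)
The plan is to establish the standard action--energy estimate for the continuation equation and then to exploit the special structure of $h^t$ for $t\le 0$ built into Assumption~\ref{classofh}. Let $(x_s,\eta_s)$ be a solution of the continuation equation (\ref{conteqn}) with $\lim_{s\to\pm\infty}(x_s,\eta_s)=(x_\pm,\eta_\pm)\in\Crit\Acal_\pm$ obtained from a monotone homotopy $h_s^t$ with $\partial_s h_s^t\ge 0$, $h_0^t=h_-^t$, $h_1^t=h_+^t$. First I would differentiate along the trajectory to get
\begin{equation*}
\frac{d}{ds}\Acal_s(x_s,\eta_s)=(\partial_s\Acal_s)(x_s,\eta_s)-\|\nabla\Acal_s(x_s,\eta_s)\|^2\le (\partial_s\Acal_s)(x_s,\eta_s),
\end{equation*}
so it suffices to show $\partial_s\Acal_s\le 0$ everywhere, because then integrating from $s=-\infty$ to $s=+\infty$ yields $\Acal_+(x_+,\eta_+)\le\Acal_-(x_-,\eta_-)$, and every generator of $\RFC^{(-\infty,a)}(h_-^t)$ is sent to a sum of generators of action $\le a$ in $\RFC^{(-\infty,a)}(h_+^t)$.

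Next I would compute $\partial_s\Acal_s$ directly from (\ref{functional}) and the formula (\ref{primitivesH}) for $H^t_{h_s^t,\kappa,K}$. Only the second term of $\Acal_s$ depends on $s$, and $\partial_s H^t_{h_s^t,\kappa,K}(x,r)=\rho(r)\beta(r)\,\partial_s h_s^t(x)$, which is $\ge 0$ pointwise by monotonicity of the homotopy since $\rho\ge 0$ and $\beta\in[0,1]$. Thus
\begin{equation*}
(\partial_s\Acal_s)(x,\eta)=-\frac{\eta}{\kappa}\int_0^1 (\partial_s H_s^{\eta t})(x(t))\,dt.
\end{equation*}

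The crucial step is the case analysis in $\eta$, and this is where the extension convention $h^t\equiv 1$ for $t\le 0$ from Assumption~\ref{classofh} is used. If $\eta\ge 0$, then $\eta t\ge 0$ for $t\in[0,1]$, the integrand is non-negative, and the prefactor $-\eta/\kappa$ is non-positive, giving $\partial_s\Acal_s\le 0$. If $\eta<0$, then $\eta t\le 0$ for $t\in[0,1]$, and since $h_s^\tau\equiv 1$ is independent of $s$ for $\tau\le 0$, the integrand vanishes identically, giving $\partial_s\Acal_s=0$. In both regimes $\partial_s\Acal_s\le 0$, which combined with the energy identity above establishes the desired action-decreasing property for every continuation cascade.

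The main obstacle is exactly this sign discussion for negative Lagrange multiplier $\eta$: without the $t\le 0$ normalization, the integrand over negative times would in general have the wrong sign, and monotonicity would fail. Granted the pointwise estimate, the rest is standard: one passes to cascades (monotonicity holds along each Floer segment and is trivially preserved by the Morse flow on the critical manifold $\Lambda$, whose points have action $0$), chooses $\kappa,K$ uniformly large on $[a-\varepsilon,b]$ as in Lemma~\ref{independence} so that $\Acal_s$ is of the desired form along the whole homotopy, and checks that the subcomplex map is independent of the chosen monotone homotopy by the usual homotopy-of-homotopies argument combined with the same sign computation. Passing to homology and to the direct/inverse limits defining $\RFH^{(-\infty,a)}$ then yields the statement.
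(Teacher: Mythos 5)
Your proof is correct and follows the same route as the paper's reference proof: the paper's only hint about the argument — ``In the proof it is essentially used that we extend $h^t$ to be constant for $t\le 0$'' — is exactly the dichotomy you exploit, splitting into $\eta\ge 0$ (where $\partial_s H_s^{\eta t}\ge 0$ and the prefactor $-\eta/\kappa\le 0$) and $\eta<0$ (where the integrand vanishes because $h_s^\tau\equiv 1$ for $\tau\le 0$). The computation $\partial_s H^t_{h_s^t,\kappa,K}=\rho\beta\,\partial_s h_s^t$, the energy identity, and the cascade/limit remarks are all in order; the one thing worth making explicit is that you must fix $\mu_\pm$, $\kappa$, $K$ uniformly over the whole monotone homotopy (e.g.\ $\mu_-\le\min h_-^t$, $\mu_+\ge\max h_+^t$) so that $\mu$ is $s$-independent and $\partial_s H_s$ has no spurious contribution from the transition term $(1-\beta)\mu$.
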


\begin{proof}
See~\cite[Proposition~3.7]{D18}.
\end{proof}

This provides all the tools needed in Section~\ref{sec:poscont}.


\begin{thebibliography}{xxxx}

\bibitem{Abo10}
M.\ Abouzaid.
A geometric criterion for generating the Fukaya category. 
{\it Publ.\ Math.\ Inst.\ Hautes Etudes Sci.\ } {\bf 112} (2010) 191--240.

\bibitem{AAS20}
A.\ Abbondandolo, M.\ Alves, M.\ Sa\u glam, F.\ Schlenk.
Entropy collapse versus entropy rigidity for Reeb and Finsler flows. 
{\it In preparation}. 

\bibitem{Alv16a}
M.\ Alves.
Cylindrical contact homology and topological entropy.
{\it Geom.\ Topol.}~{\bf 20} (2016), 3519--3569.

%

\bibitem{AF12} 
P.\ Albers and U.\ Frauenfelder. 
A variational approach to Givental's nonlinear Maslov index. 
{\it Geom.\ Funct.\ Anal.} {\bf 22} (2012), no.~5. 1033--1050.

\bibitem{AM17}
M.\ Alves and M.\ Meiwes.
Dynamically exotic contact spheres in dimensions $\geq7$.
{\it Commentarii Mathematici Helvetici} {\bf 94 (3)} (2019), 569--622.

\bibitem{AM18}
P.\ Albers and W.\ Merry. 
Orderability, contact non-squeezing, and Rabinowitz Floer homology. 
{\it J. Symp. Geom.} {\bf 16(6)} (2018), 1481--1547.
%

\bibitem{B71}
R.\ Bowen,
Entropy for group endomorphisms and homogeneous spaces,
{\it Trans.\ Amer.\ Soc.\ } {\bf 153} (1971), 401--414.

\bibitem{CF09}
K.\ Cieliebak and U.\ Frauenfelder.
A Floer Homology for exact contact Embeddings.
{\it Pacific J.\ Math.} {\bf 293} (2009) no.~2, 251--316. 

\bibitem{CFO10} 
K.\ Cieliebak, U.\ Frauenfelder and A.\ Oancea. 
Rabinowitz Floer homology and symplectic homology.
{\it Annales scientifiques de l'\'Ecole Normale Sup\'erieure} {\bf 43} (2010), no.~6. 957--1015.


\bibitem{D71}
E.\ I.\ Dinaburg,
Connection between various entropy characterizations of dynamical systems,
{\it Izvestija AN SSSR} {\bf 35} (1971), 324--366.


\bibitem{D18}
L.\ Dahinden.
Positive topological entropy of positive contactomorphisms.
{\it J.\ Symp.\ Geom.} {\bf 18(3)} (2020), 691--732.

\bibitem{D20}
L.\ Dahinden.
Redshift and asymptotic density of the length spectrum of Cauchy hypersurfaces.
{\it In preparation.}

\bibitem{DFPV}
L.\ Diaz, T.\ Fisher, M.\ Pacifico, J.\ Vieitez.
Entropy-expansiveness for partially hyperbolic diffeomorphisms.
{\it Discrete and Continuous Dynamical Systems} {\bf 32(12)} (2012) doi: 10.3934/dcds.2012.32.4195.

\bibitem{EP00}
Y.\ Eliashberg, L.\ Polterovich. 
Partially ordered groups and geometry of contact transformations.
{\it Geom.\ Funct.\ Anal.} {\bf 10(6)} (2000), 1448--1476.

\bibitem{FLS15} 
U.\ Frauenfelder and C.\ Labrousse and F. Schlenk. 
Slow volume growth for Reeb flows on spherizations and contact Bott--Samelson theorems.
 {\it J.\ Topol.\ Anal.}~{\bf 07} (2015), no 3, 407--451.


\bibitem{FraSch06}
U.\ Frauenfelder and F.\ Schlenk.
Fiberwise volume growth via Lagrangian intersections. 
{\it J.~Symplectic Geom.} ~{\bf 4} (2006), 117--148.

\bibitem{GHRS19}
S.\ Gangloff, A.\ Herrera, C.\ Rojas and M.\ Sablik.
On the computability properties of topological entropy: a general approach.
arXiv:1906.01745

\bibitem{Gro78}
M.\ Gromov.
Homotopical effects of dilatation. {\it J.~Differential Geom.}~{\bf 13} (1978) 303--310.

\bibitem{HK}
B.\ Hasselblatt, A.\ Katok.
Introduction to the modern theory of dynamical systems.
{\it Cambridge University Press} (1995).

\bibitem{HKC92}
L.\ Hurd, J.\ Kari, and K.\ Culik. 
The topological entropy of cellular automata is uncomputable.
{\it Ergodic Theory and Dynamical Systems}~{\bf 12} (1992), 255--265.

\bibitem{HSX}
Y.\ Hua, R.\ Radu, Z.\ Xia.
Topological entropy and partially hyperbolic diffeomorphisms.
{\it Ergodic Theory Dynam.\ Systems} {\bf 28 nr.\ 3} (2008), 843--862.

\bibitem{KS96}
S.\ Kolyada, L.\ Snoha,
Topological entropy of nonautonomous dynamical systems.
{\it Random and computational dynamics} {\bf 4(2\&3)} (1996), 205--233.

\bibitem{M79}
A.\ Manning.
Topological Entropy for Geodesic Flows.
{\it Annals of Mathematics} {\bf Vol. 110, Nr. 3} (1979), pp. 567-573

\bibitem{MacSch11}
L.\ Macarini and F.\ Schlenk.
 Positive topological entropy of Reeb flows on spherizations. {\it Math.\ Proc.\ Cambridge Philos.\ Soc.}~{\bf 151} (2011) 103--128.

\bibitem{M02}
J.\ Milnor.
Is entropy effectively computable?
https://www.math.iupui.edu/~mmisiure/open/JM1.pdf

\bibitem{N}
S.\ E.\ Newhouse.
Continuity properties of entropy.
{\it Annals of Mathematics}~{\bf 129} (1989), 215--235.

\bibitem{PRSZ}
L.\ Polterovich, D.\ Rosen, K.\ Samvelyan, J.\ Zhang.
Topological Persistence in Geometry and Analysis.
arXiv:1904.04044

\bibitem{Y} 
Y.\ Yomdin. Volume growth and entropy. 
{\it Israel J.\ Math.}  {\bf 57} (1987) 285--300.




%
%
%

\end{thebibliography}
\end{document}